\newenvironment{equ*}[1]{\begin{IEEEeqnarray*}{#1}}{\end{IEEEeqnarray*}}
\newtheorem*{rep@theorem}{\rep@title}
\newcommand{\newreptheorem}[2]{%
\newenvironment{rep#1}[1]{%
 \def\rep@title{#2 \ref{##1}}%
 \begin{rep@theorem}}%
 {\end{rep@theorem}}}
\newtheorem{thm}{Theorem}[section]
\newtheorem{lemma}[thm]{Lemma}
\crefname{lemma}{Lemma}{Lemmata}
\newtheorem{prop}[thm]{Proposition}
\newtheorem{corr}[thm]{Corollary}
\newtheorem{claim}[thm]{Claim}
\newtheorem*{thm*}{Theorem}
\newtheorem*{lemma*}{Lemma}
\newtheorem*{prop*}{Proposition}
\newtheorem*{corr*}{Corollary}
\newtheorem*{claim*}{Claim}
\newtheorem*{convergence*}{$(\star)$ Condition}
\newtheorem*{pure_theorem}{Purity Theorem~\ref{theorem:pure}}
\newtheorem*{inert_theorem}{Inertia Theorem~\ref{theorem:inert}}
\theoremstyle{remark}
\newtheorem{rmk}[thm]{Remark}
\newtheorem*{rmk*}{Remark}
\newtheorem*{conj*}{Conjecture}
\newtheorem*{quest*}{Question}
\theoremstyle{definition}
\newtheorem{defn}[thm]{Definition}
\newtheorem{exmp}[thm]{Example}
\newtheorem*{defn*}{Definition}
\newtheorem*{exmp*}{Example}
\newcommand{\R}{\mathbb{R}}
\newcommand{\Z}{\mathbb{Z}}
\newcommand{\N}{\mathbb{N}}
\newcommand{\Shat}{\widehat{S}}
\newcommand{\Sigmahat}{\widehat{\Sigma}}
\newcommand{\intrm}{\mathrm{int}}
\newcommand{\Mod}{\mathrm{Mod}}
\newcommand{\PMod}{\mathrm{PMod}}
\newcommand{\PSL}{\mathrm{PSL}}
\newcommand{\Aut}{\mathrm{Aut}}
\newcommand{\Ray}{\mathcal{R}}
\newcommand{\normseq}{{\lbrace N_n\rbrace}}
\newcommand{\Ndiv}{N_{\text{div}}}
\newcommand{\defeq}{\vcentcolon=}
\title{Normal Subgroups of Big Mapping Class Groups}
\author{Danny Calegari}
\address{Department of Mathematics\\ University of Chicago\\ Chicago, Illinois, USA}
\email[D.~Calegari]{dannyc@math.uchicago.edu}
\author{Lvzhou Chen}
\address{Department of Mathematics\\ University of Texas at Austin\\ Austin, Texas, USA}
\email[L.~Chen]{lvzhou.chen@math.utexas.edu}
\begin{document}

\begin{abstract}
Let $S$ be a surface and let $\Mod(S,K)$ be the mapping class group of $S$ 
permuting a Cantor subset $K \subset S$. We prove two structure theorems
for normal subgroups of $\Mod(S,K)$.

(Purity:) if $S$ has finite type, every normal subgroup of $\Mod(S,K)$ either 
contains the kernel of the forgetful map to the mapping class group of $S$, or it is `pure' 
--- i.e.\/ it fixes the Cantor set pointwise. 

(Inertia:) for any $n$ element subset $Q$ of the Cantor set, there is a forgetful map 
from the pure subgroup $\PMod(S,K)$ of $\Mod(S,K)$ to the mapping class group of 
$(S,Q)$ fixing $Q$ pointwise.
If $N$ is a normal subgroup of $\Mod(S,K)$ contained in $\PMod(S,K)$, its image $N_Q$ is likewise
normal. We characterize exactly which finite-type normal subgroups $N_Q$ arise this way.

Several applications and numerous examples are also given.
\end{abstract}

\maketitle

\setcounter{tocdepth}{2}
\tableofcontents

\section{Introduction}

In recent years there has been an surge of interest in the theory of so-called
{\em big mapping class groups}, i.e.\/ mapping class groups of surfaces of infinite type.
There are many motivations for studying such objects (see e.g. \cite{Aramayona_Vlamis} for
an excellent recent survey) but our motivation comes from (2 real dimensional or 1 complex
dimensional) dynamics. Thus we are especially interested in big mapping class groups
of a certain kind, which we now explain.

If $S$ is a surface of finite type, a hyperbolic dynamical system $\Gamma$ acting on $S$ will often
determine a dynamically distinguished compact subset $\Lambda$. The terminology for
$\Lambda$ varies depending on the context: it is an {\em attractor} if $\Gamma$ is an Iterated Function System;
a {\em limit set} if $\Gamma$ is a Kleinian group; a {\em Julia set} if $\Gamma$ is the set
of iterates of a rational map; or a {\em hyperbolic set} if $\Gamma$ is an Axiom A diffeomorphism.
Often, $\Lambda$ will be a Cantor set, and the topology of the moduli spaces of pairs $(\Gamma,\Lambda)$
is related to the mapping class group of $S$ minus a Cantor set and its
subgroups. Because there is no natural choice of coordinates on $\Lambda$, 
it is the {\em normal} subgroups
of such mapping class groups that will arise, and therefore this paper is devoted to
exploring the structure and classification of such subgroups.

\subsection{Statement of results}

Let $S$ be a surface of finite type (i.e. $S$ has finite genus and finitely many punctures). Let $\Mod(S)$ denote
the mapping class group of $S$ and for any compact totally disconnected subset 
$Q$ of $S$ let $\Mod(S,Q)$ denote the mapping class group of $S$ rel. $Q$ (i.e.\/ the group of
homotopy classes of orientation-preserving self-homeomorphisms of $S$ permuting $Q$ as a set)
and $\PMod(S,Q)$ the {\em pure} mapping class group of $S$ rel. $Q$ (i.e.\/ the group of
homotopy classes of orientation-preserving self-homeomorphisms of $S$ fixing $Q$ pointwise).
Pure subgroups of big mapping class groups and their properties are studied 
e.g.\/ in \cite{Aramayona_Patel_Vlamis} and \cite{Patel_Vlamis}. 
Note that $\PMod(S,Q)$ is a normal subgroup of $\Mod(S,Q)$. When $Q$ is finite, we write
the cardinality $n:=|Q|$. For finite $Q$ the groups $\Mod(S,Q)$ and $\PMod(S,Q)$ 
depend only on the cardinality of $Q$, up to conjugation by elements of $\Mod(S,Q)$. 
By abuse of notation, we denote these equivalence classes of groups by $\Mod(S,n)$ 
and $\PMod(S,n)$ respectively.

There are forgetful maps $\Mod(S,Q) \to \Mod(S)$ for all $Q$, and 
$\PMod(S,R) \to \PMod(S,Q)$ for all pairs ($Q,R$) with $Q \subset R$.
When $K \subset S$ is a Cantor set and $N \subset \PMod(S,K)$ is a normal subgroup of
$\Mod(S,K)$, the image $N_n \subset \PMod(S,n)$ is a well-defined normal subgroup of $\Mod(S,n)$.

Our first main theorem is the following:

\begin{pure_theorem}
Let $S$ be a connected orientable surface of finite type, and let $K$
be a Cantor set in $S$. Then any normal subgroup of $\Mod(S,K)$ either contains the
kernel of the forgetful map to $\Mod(S)$, or it is contained in $\PMod(S,K)$.
\end{pure_theorem}

The two cases in the theorem correspond to normal subgroups of $\Mod(S,K)$ of countable and uncountable index respectively. 
It implies that all normal subgroups of countable index are pulled back from normal subgroups of $\Mod(S)$.

When $S$ is the plane or the 2-sphere, $\Mod(S)$ is trivial. It follows that every
proper normal subgroup of $\Mod(S,K)$ is contained in $\PMod(S,K)$; in particular,
its index in $\Mod(S,K)$ is {\em uncountable}. Thus the mapping class group
of the plane or the sphere minus a Cantor set admits no nontrivial homomorphism to a
countable group. This fact was proved independently by Nicholas Vlamis \cite{Vlamis}.

We also obtain the abelianization and generating sets of $\Mod(S,K)$; See Section~\ref{sec: app}.

\medskip

Our second main theorem concerns the subgroups $N_n \subset \PMod(S,n)$ that arise as
the image of subgroups $N \subset \PMod(S,K)$ normal in $\Mod(S,K)$. To state this 
theorem we must give the definition of an {\em inert} subgroup of $\PMod(S,n)$.

Let $S$ be any surface and let $Q \subset S$
be a finite subset with cardinality $n$. Let $D_Q \subset S$ be a collection of $n$
disjoint closed disks, each centered at some point of $Q$. Let $\PMod(S,D_Q)$ be the mapping
class group of $S$ fixing $D_Q$ pointwise (by both homeomorphisms and homotopies). 
There is a forgetful map $\PMod(S,D_Q) \to \PMod(S,Q)$
and as is well-known, this is a $\Z^n$ central extension (see e.g.\/ \cite{Farb_Margalit} Prop~3.19).

Let $\alpha \in \PMod(S,Q)$ be any element, and let $\hat{\alpha}$ be some lift to
$\PMod(S,D_Q)$. Let $f:Q \to Q$ be any map (possibly not injective). The {\em insertion}
$f^*\hat{\alpha} \in \PMod(S,n)$ is the ($\Mod(S,n)$)-conjugacy class of element defined as
follows. Let $\pi:D_Q \to Q$ be the map that takes each component of $D_Q$ to its center, and 
let $\tilde{f}:Q \to D_Q$ be any injective map for which the composition $\pi\tilde{f}=f$.
Then $f^*\hat{\alpha}$ is the image of $\hat{\alpha}$ under the forgetful map to 
$\PMod(S,\tilde{f}(Q))$ which may be
canonically identified (up to conjugacy in $\Mod(S,n)$) with $\PMod(S,n)$. Note that the
class of $f^*\hat{\alpha}$ is invariant under pre-composition of $f$ with a permutation of $Q$,
and it depends only on the cardinalities of the preimages (under $f$) of the elements of $Q$.

A subgroup $N \subset \PMod(S,n)$ normal in $\Mod(S,n)$ is said to be {\em inert} if,
after fixing some identification of $\PMod(S,n)$ with $\PMod(S,Q)$, for every
$\alpha \in N$ there is a lift $\hat{\alpha}$ in $\PMod(S,D_Q)$ so that for every
$f:Q \to Q$ the insertion $f^*\hat{\alpha}$ is in $N$. For more on this definition see 
\S~\ref{subsection:inertia}.

Our second main theorem is the following:

\begin{inert_theorem}
Let $S$ be any connected, orientable surface, and let $K$ be a Cantor set in $S$. 
A subgroup $N_n$ of $\PMod(S,n)$ is equal to the image of some $\Mod(S,K)$-normal
pure subgroup $N \subset \PMod(S,K)$ under the forgetful map if and only if it is inert.
\end{inert_theorem}

\subsection*{Acknowledgment}
The authors thank Ian Biringer for his questions about the normal closure of torsion elements in big mapping class groups, which inspired this work. The authors also thank Lei Chen, Tom Church, Benson Farb, Elizabeth Field, Hannah Hoganson, Rylee Lyman, Justin Malestein, Dan Margalit, Jeremy Miller, Andy Putman, Jing Tao, and Nicholas Vlamis for helpful discussions.

\section{The Purity Theorem}

The purpose of this section is to prove the following theorem:

\begin{thm}[Purity Theorem]\label{theorem:pure}
Let $S$ be a connected orientable surface of finite type, and let $K$
be a Cantor set in $S$. Then any normal subgroup of $\Mod(S,K)$ either contains the
kernel of the forgetful map to $\Mod(S)$, or it is contained in $\PMod(S,K)$.
\end{thm}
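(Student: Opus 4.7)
The plan is to show that any normal subgroup $N \trianglelefteq \Mod(S,K)$ not contained in $\PMod(S,K)$ must contain $\ker(\pi)$, where $\pi\colon \Mod(S,K) \to \Mod(S)$ is the forgetful map. The approach proceeds in three stages: first produce an explicit element of $N \cap \ker(\pi)$ acting non-trivially on $K$, then spread this action over $K$ via change-of-coordinates, and finally handle the pure point-pushing subgroup. The first two stages are reasonably direct; the last is the main technical obstacle.

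For the first stage, pick $\phi \in N \setminus \PMod(S,K)$. Since $\phi$ moves some $p \in K$, by continuity and the Cantor-set topology we find disjoint clopen subsets $A \ni p$ and $B = \phi(A)$ of $K$; after shrinking $A$ if necessary we may arrange that $A$, $B$, and $\phi(B)$ are pairwise disjoint. Choose a closed disk $D \subset S$ with $D \cap K = A \sqcup B$ and let $g \in \Mod(S,K)$ be supported in $D$, fixing $\partial D$, and swapping $A$ with $B$ (such $g$ exists because $A$ and $B$ are homeomorphic clopen Cantor subsets). Since $g$ is supported on a disk with trivial boundary action, $\pi(g) = 1$, so $[g,\phi] \in N \cap \ker(\pi)$. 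A direct calculation tracking each of the clopen pieces shows that $[g,\phi]$ acts on $K$ as a $3$-cycle on $\{A, B, \phi(B)\}$ and as the identity on the rest of $K$.

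For the second stage, the change-of-coordinates principle in $\Mod(S,K)$ supplies mapping classes carrying any ordered triple of pairwise disjoint clopen subsets of $K$ to any other such triple of matching combinatorial type. Conjugating $[g,\phi]$ by these elements produces, inside $N \cap \ker(\pi)$, an element realizing any desired $3$-cycle on any specified triple of disjoint clopen subsets of $K$. Combined with Anderson's theorem that $\Homeo(K)$ is simple, this forces the image of $N \cap \ker(\pi)$ in $\Homeo(K)$ to coincide with the full image of $\ker(\pi)$ there.

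The main obstacle is the third stage: upgrading surjectivity onto $\Homeo(K)$ to the actual containment $N \supseteq \ker(\pi)$. This amounts to showing $N \supseteq \PMod(S,K) \cap \ker(\pi)$, the subgroup of pure surface braids on the Cantor set which fix $K$ pointwise. The plan is to multiply the $3$-cycle elements from Stage 2 to produce products whose combined permutation of $K$ is trivial but whose underlying braid class is not, yielding explicit nonzero pure point-pushing elements in $N$. A fragmentation result generating $\PMod(S,K) \cap \ker(\pi)$ from braids supported on disks $D$ with $D \cap K$ a proper clopen subset, together with $\Mod(S,K)$-conjugacy acting transitively on such supports, then places every such generator in $N$. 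The finite-type hypothesis on $S$ enters at precisely this step, allowing one to control braids near the topological ends of $S$ and to close up $D$-supported generators into a generating family for the full pure point-pushing subgroup.
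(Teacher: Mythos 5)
Your Stages 1 and 2 are essentially sound: the commutator $[g,\phi]$ with $g$ a disk-supported swap does produce a nontrivial element of $N\cap\ker\pi$, and Anderson's simplicity of $\Homeo(K)$ then forces $N\cap\ker\pi$ to surject onto the image of $\ker\pi$ in $\Homeo(K)$. (One small repair is needed: if $\phi$ acts on $K$ as an involution, no amount of shrinking makes $A$, $\phi(A)$, $\phi^2(A)$ pairwise disjoint; take instead $g$ supported on a disk meeting $K$ only in $A$, so that $[g,\phi]$ acts as $g$ on $A$ and as $\phi g^{-1}\phi^{-1}$ on $\phi(A)$, or first replace $\phi$ by an element of $N$ whose action on $K$ has infinite order.) The real problem is Stage 3, which is where essentially all the content of the theorem lives, and your proposal does not prove it: it consists of two unproved claims, each of which is a substantial theorem.

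First, you need \emph{every} mapping class supported in a disk $D$ with $D\cap K$ a proper nonempty clopen subset to lie in $N$, not merely some such classes. Your mechanism --- multiply conjugates of the $3$-cycle element to obtain pure elements, then move supports around by conjugation --- only yields the particular elements that happen to arise this way together with their conjugates; $\Mod(S,K)$ acts transitively on dividing disks but certainly not on the (enormous) group of classes supported in a fixed dividing disk, so ``transitivity on supports'' cannot upgrade a handful of pure elements to all disk-supported ones. The paper's solution is an infinite-swindle commutator identity: for an \emph{arbitrary} $f$ supported in a dividing disk one forms $x_f=\prod_{n\ge0}h^nfh^{-n}$ over a shrinking sequence of disjoint disks and exhibits $f$ as $a_fba_fb^{-1}$ with $a_f=[x_f,g]$ lying in the normal closure of $g$; some device of this kind is indispensable. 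Second, your ``fragmentation result'' --- that disk-supported classes generate $\PMod(S,K)\cap\ker\pi$ (in fact all of $\ker\pi$) --- is asserted rather than proved; it is the other half of the paper's argument, established by cutting $S$ along lassos and short rays and inducting on distance in the ray graph, and this is also exactly where the finite-type hypothesis is used. As written, Stage 3 restates what must be shown rather than arguing for it.
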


An immediate corollary is:

\begin{corr}\label{corr: maximal normal}
Any homomorphism from $\Mod(S,K)$ to a countable group factors through $\Mod(S,K)\to\Mod(S)$.
Thus if $\Mod(S)$ is trivial (for instance if $S$ is the plane or the 2-sphere) then
$\Mod(S,K)$ has no nontrivial homomorphisms to a countable group.
\end{corr}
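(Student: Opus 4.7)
The plan is to apply the Purity Theorem to the kernel of the given homomorphism and then rule out the pure alternative by a cardinality argument.

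Given $\varphi \colon \Mod(S,K) \to G$ with $G$ countable, I would set $N := \ker\varphi$ and invoke Theorem~\ref{theorem:pure}. Either $N$ contains the kernel of the forgetful map $\pi \colon \Mod(S,K) \to \Mod(S)$, in which case $\varphi$ descends through $\pi$ automatically and we are done, or $N \subset \PMod(S,K)$, the case I must exclude.

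To exclude the pure case I would use that $\Mod(S,K)/N \hookrightarrow G$ while $\Mod(S,K)/N \twoheadrightarrow \Mod(S,K)/\PMod(S,K)$; countability of $G$ would force $\Mod(S,K)/\PMod(S,K)$ to be countable, so it suffices to prove the latter quotient is uncountable. Since $\Mod(S,K)/\PMod(S,K)$ acts faithfully on $K$ by permutations, I would exhibit $2^{\aleph_0}$ distinct classes explicitly: choose a null sequence of pairwise disjoint closed disks $D_1, D_2, \ldots$ in $S$, each containing two distinct points $p_i \neq q_i$ of $K$, and for each $i$ let $\sigma_i$ be an orientation-preserving homeomorphism of $S$ supported in $D_i$ that swaps $p_i$ and $q_i$. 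Because $\mathrm{diam}(D_i) \to 0$, every subset $I \subseteq \N$ gives a well-defined element $\sigma_I := \prod_{i \in I} \sigma_i \in \Mod(S,K)$, and distinct $I$ induce distinct permutations of $K$ (they disagree on $p_i$ for every $i$ in the symmetric difference). This produces the required $2^{\aleph_0}$ classes in $\Mod(S,K)/\PMod(S,K)$.

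The last sentence of the corollary is then immediate: when $\Mod(S)$ is trivial (as for $S = \R^2$ or $S^2$), every homomorphism that factors through $\pi$ is trivial. The only non-formal step in the whole plan is the uncountability of $\Mod(S,K)/\PMod(S,K)$, handled by the explicit disk-exchange construction above; I do not anticipate any genuine obstacle beyond that.
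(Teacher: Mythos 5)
Your proof is correct and follows exactly the route the paper intends (the paper states this as an immediate corollary, noting that the two alternatives of the Purity Theorem correspond to countable and uncountable index): apply Theorem~\ref{theorem:pure} to $\ker\varphi$ and rule out the pure alternative because $\PMod(S,K)$ has uncountable index. Your explicit null-sequence-of-disks construction of $2^{\aleph_0}$ classes in $\Mod(S,K)/\PMod(S,K)$ is a valid way to supply the uncountability that the paper leaves implicit.
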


If $\Mod(S)$ is trivial then $\Mod(S,K)/\PMod(S,K)$ is isomorphic to $\Aut(K)$, the
group of self-homeomorphisms of a Cantor set. So Corollary~\ref{corr: maximal normal} gives
a new proof that $\Aut(K)$ is simple, which is a theorem of Anderson \cite{Anderson}.

\subsection{Applications}\label{sec: app}

In this section we give some applications of the Purity Theorem~\ref{theorem:pure}.

The first application is to give a computation of $H_1(\Mod(S,K))$, the abelianization of $\Mod(S,K)$.

\begin{thm}\label{thm: H_1}
Let $S$ be a surface of finite type, and let $K$
be a Cantor set in $S$. Then the forgetful map $\Mod(S,K) \to \Mod(S)$ induces an isomorphism
on $H_1$; i.e.\/
$$H_1(\Mod(S,K)) \cong H_1(\Mod(S)).$$
\end{thm}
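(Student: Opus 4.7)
The plan is to apply the Purity Theorem directly to the commutator subgroup of $\Mod(S,K)$, viewed as a normal subgroup.

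First I would record that the forgetful map $\pi \colon \Mod(S,K) \to \Mod(S)$ is surjective: any mapping class of $S$ admits a representative preserving $K$ setwise, because any two Cantor subsets of a fixed finite-type surface are ambient isotopic. Surjectivity of $\pi$ forces the induced map $H_1(\Mod(S,K)) \to H_1(\Mod(S))$ to be surjective as well, so the theorem reduces to establishing the reverse inclusion
\[
\ker(\pi) \;\subseteq\; [\Mod(S,K), \Mod(S,K)].
\]

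Now the commutator subgroup $C := [\Mod(S,K), \Mod(S,K)]$ is normal in $\Mod(S,K)$, so Theorem~\ref{theorem:pure} applies and forces exactly one of two alternatives: either $C \supseteq \ker(\pi)$, in which case we are done, or $C \subseteq \PMod(S,K)$. The latter alternative says that $\Mod(S,K)/\PMod(S,K)$ is abelian. Since restriction to $K$ defines an injective homomorphism $\Mod(S,K)/\PMod(S,K) \hookrightarrow \mathrm{Aut}(K)$, it is enough to exhibit two elements of $\Mod(S,K)$ with non-commuting restrictions to $K$. For this, I would choose three pairwise disjoint disks $D_1, D_2, D_3 \subset S$ with each $K_i := K \cap D_i$ a nonempty clopen subset of $K$; then, using Schoenflies-type isotopies in tubular neighborhoods of arcs joining the $D_i$, construct self-homeomorphisms $\phi_{12}$ and $\phi_{23}$ of $S$ each swapping the indicated pair of clopen pieces and fixing the remaining points of $K$. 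Their restrictions to $K$ are two transpositions with overlapping support and therefore do not commute in $\mathrm{Aut}(K)$, ruling out the second Purity alternative.

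Most of the work has been done by Theorem~\ref{theorem:pure}; the only points needing explicit verification are the surjectivity of the forgetful map and the construction of the transposition homeomorphisms $\phi_{ij}$, both of which are standard facts about homeomorphisms of surfaces containing Cantor sets.
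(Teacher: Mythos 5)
Your proof is correct and follows essentially the same route as the paper: apply the Purity Theorem to the commutator subgroup, rule out the pure alternative by noting the image of $\Mod(S,K)$ in $\Aut(K)$ is nonabelian, and conclude via surjectivity of the forgetful map. The only difference is that you spell out the non-commuting transpositions explicitly where the paper simply asserts that the image of the commutator subgroup in $\Aut(K)$ is nontrivial; this is a reasonable amount of added detail, not a change of method.
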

\begin{proof}
Let $N$ be the commutator subgroup of $\Mod(S,K)$. Its image in $\Aut(K)$ is nontrivial (actually,
its image is all of $\Aut(K)$ by Anderson's theorem, but we do not use this fact). Thus $N$
is not contained in $\PMod(S,K)$, so by Theorem~\ref{theorem:pure} it contains the kernel of
the forgetful map $\pi:\Mod(S,K) \to \Mod(S)$.

Since $\pi$ is surjective, $\pi(N)$ is equal to the commutator subgroup of $\Mod(S)$. Thus
$$H_1(\Mod(S,K))=\Mod(S,K)/N\cong \Mod(S)/\pi(N)= H_1(\Mod(S)).$$
\end{proof}

The next application lets us determine the normal closure of certain subsets of elements.
If $T$ is a subset of a group $G$, the {\em normal closure} of $T$ is the subgroup of $G$
(algebraically) generated by conjugates of $T$. A subset $T$ is said to {\em normally generate} $G$ if its
normal closure is $G$.

\begin{lemma}\label{lemma: normal gen criterion}
Let $T \subset \Mod(S,K)$ be a subset that is not contained in $\PMod(S,K)$. If the
image $\pi(T)$ of $T$ in $\Mod(S)$ normally generates $\Mod(S)$ then $T$ normally 
generates $\Mod(S,K)$.
\end{lemma}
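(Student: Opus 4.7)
The plan is to apply the Purity Theorem directly to the normal closure of $T$. Let $N$ denote the normal closure of $T$ in $\Mod(S,K)$, and let $\pi \colon \Mod(S,K) \to \Mod(S)$ be the forgetful map.

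First I would observe that since $T \not\subset \PMod(S,K)$ by hypothesis, the same is true of the subgroup $N$ it normally generates. Thus $N$ satisfies the dichotomy of Theorem~\ref{theorem:pure}: it must contain $\ker(\pi)$.

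Next I would compute $\pi(N)$. A standard fact about normal closures is that if $\varphi \colon G \twoheadrightarrow H$ is surjective and $T \subset G$, then $\varphi$ maps the normal closure of $T$ onto the normal closure of $\varphi(T)$. Applying this to $\pi$ (which is surjective, since every mapping class of $S$ is realized by a homeomorphism that can be chosen to permute $K$) gives that $\pi(N)$ is the normal closure of $\pi(T)$ in $\Mod(S)$, which equals $\Mod(S)$ by the hypothesis that $\pi(T)$ normally generates $\Mod(S)$.

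Finally, combining the two: $N$ contains $\ker(\pi)$ and satisfies $\pi(N) = \Mod(S)$, so $N = \pi^{-1}(\Mod(S)) = \Mod(S,K)$, which is what we wanted. There is essentially no serious obstacle here; the lemma is a clean corollary of the Purity Theorem, and the only point to be careful about is the elementary fact that normal closures are preserved under surjections, ensuring that we identify $\pi(N)$ correctly with the normal closure of $\pi(T)$ downstairs.
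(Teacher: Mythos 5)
Your proof is correct and follows essentially the same route as the paper: form the normal closure $N$ of $T$, note it is not pure so the Purity Theorem forces $\ker\pi \subset N$, and then identify $N$ as the full preimage of the normal closure of $\pi(T)$, which is all of $\Mod(S)$ by hypothesis. The paper's version is just a more compressed statement of exactly this argument, so there is nothing to change.
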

\begin{proof}
Let $N$ denote the normal closure of $T$. Then $N$ is not contained in $\PMod(S,K)$ by
hypothesis, so by Theorem~\ref{theorem:pure}, it contains the kernel of $\pi$. It follows
that $N$ is equal to $\pi^{-1}$ of the normal closure of $\pi(T)$.
\end{proof}

One interesting case is to take $T$ to be the set of torsion elements.

\begin{lemma}\label{lemma: PMap torsion-free}
	$\PMod(S,K)$ is torsion-free.
\end{lemma}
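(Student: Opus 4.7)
The plan is to argue by contradiction, reducing torsion in $\PMod(S,K)$ to torsion in the finite-type pure mapping class groups $\PMod(S,Q)$ via the forgetful maps, and then ruling that out by Nielsen realization. Suppose $\alpha \in \PMod(S,K)$ has finite order $n \geq 2$. For each finite subset $Q \subset K$, let $\alpha_Q \in \PMod(S,Q)$ denote the image of $\alpha$ under the forgetful map; this has order dividing $n$.

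First I would apply Kerckhoff's Nielsen realization theorem to the finite cyclic subgroup $\langle \alpha_Q \rangle \subset \Mod(S,Q)$ to obtain a finite-order orientation-preserving self-homeomorphism $\phi_Q$ of $S$, preserving $Q$ setwise, whose mapping class rel $Q$ is $\alpha_Q$. Because $\alpha_Q$ is pure, $\phi_Q$ induces the trivial permutation of $Q$ and therefore fixes $Q$ pointwise. Next, a Riemann--Hurwitz argument bounds the fixed set of any nontrivial finite-order orientation-preserving self-homeomorphism of $S$ by a constant $C(S)$ depending only on $S$: after isotopy such a homeomorphism is an isometry of a constant-curvature metric on $S$, and its fixed points correspond to cone points of the quotient orbifold, whose number is bounded in terms of $\chi(S)$ and the order. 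Choosing $Q$ with $|Q| > C(S)$ therefore forces $\phi_Q = \mathrm{id}$, whence $\alpha_Q = 1$ in $\PMod(S,Q)$.

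It remains to deduce $\alpha = 1$ in $\PMod(S,K)$ from the fact that $\alpha_Q = 1$ for every finite $Q \subset K$ with $|Q| > C(S)$. Here a representative $\phi$ of a nontrivial $\alpha$ must act nontrivially (up to inner automorphism) on $\pi_1(S \setminus K)$, and any such nontrivial action is witnessed by a compact essential arc or simple closed curve in $S \setminus K$ whose isotopy class rel $K$ is not preserved. Since the isotopy class of such a compact object is determined by combinatorial data involving only finitely many points of $K$---one takes $Q$ to include a point in each complementary region of a suitable detecting curve or arc system---the nontriviality of $\alpha$ descends to $\PMod(S,Q)$ for some large finite $Q$, contradicting $\alpha_Q = 1$. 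The main obstacle is this last separation-of-points step; making it rigorous requires an Alexander-method-type argument adapted to the Cantor-set setting, exploiting the compactness of essential arcs and curves together with the fact that their isotopy classes rel $K$ are determined by only finitely many points of $K$.
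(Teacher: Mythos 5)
Your route is genuinely different from the paper's. The paper realizes a torsion element of $\Mod(S,K)\cong\Mod(S-K)$ directly by a periodic homeomorphism of the \emph{infinite-type} surface $S-K$, quoting the Nielsen realization theorem for infinite-type surfaces of Afton--Calegari--Chen--Lyman; this extends over $K$ to a periodic homeomorphism of $S$ fixing $K$ pointwise, and K\'er\'ekjart\`o's theorem (finitely many fixed points for a nontrivial periodic homeomorphism of a finite-type surface) gives an immediate contradiction. You instead push the torsion down to the finite-type quotients $\PMod(S,Q)$, where only classical Kerckhoff realization is needed, and kill it there by a Riemann--Hurwitz count. That part of your argument is sound: treating $Q$ as punctures, a nontrivial order-$k$ realization forces $(k-1)|Q|\le 2k-\chi(S)$, so $|Q|\le 4-\chi(S)=:C(S)$ uniformly in $k$, and purity does guarantee that the realization fixes $Q$ pointwise. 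So $\alpha_Q=1$ for all finite $Q$. The trade-off is that you have replaced one substantial external input (infinite-type Nielsen realization) with another: the injectivity of $\PMod(S,K)\to\prod_Q\PMod(S,Q)$.

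That last step is a genuine gap, and you correctly identify it as the main obstacle, but you leave it unproved. It is true, and here is what filling it requires: (i) an Alexander-method statement for the infinite-type surface $S-K$, namely that a nontrivial element of $\PMod(S,K)$ must move the isotopy class rel $K$ of some essential compact simple closed curve or arc (this is a theorem, e.g.\ of Hern\'andez--Morales--Valdez; note that for finite-type closed surfaces the analogous statement fails for the genus-two hyperelliptic involution, so one cannot just wave at the finite-type Alexander method); and (ii) for a specific pair of curves $c$, $\alpha(c)$ that are non-isotopic rel $K$, a finite $Q\subset K$ rel which they remain non-isotopic --- obtained by putting them in minimal position and placing a point of $K$ in each complementary bigon or in the annulus they cobound in $S$. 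Be careful with your phrasing that ``the isotopy class of such a compact object is determined by combinatorial data involving only finitely many points of $K$'': no single finite $Q$ determines the isotopy class rel $K$ (there are uncountably many classes rel $K$ and only countably many rel $Q$); what is true, and what you need, is the pairwise separation statement (ii). With (i) and (ii) supplied your proof closes up, but as written the crux is asserted rather than proved, whereas the paper's argument is complete in a few lines once the realization theorem is quoted.
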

\begin{proof}
We may identify $\Mod(S,K)$ with $\Mod(S-K)$ in the obvious way. Every torsion element
$g$ of $\Mod(S-K)$ is realized by a periodic homeomorphism $f$ of $S-K$ by \cite[Thm.~2]{Afton_Calegari_Chen_Lyman}
which extends to a periodic homeomorphism of $S$ permuting $K$.

But any nontrivial periodic (orientation-preserving) homeomorphism $f$ of a finite type surface 
fixes only finitely many points. This follows from a theorem of K\'er\'ekjart\`o
\cite{Kerekjarto}, which says that any finite order orientation-preserving homeomorphism
of the plane is conjugate to a rotation (and therefore fixes at most two points). This
is well-known to experts, but we give an argument to reduce to K\'er\'ekjart\`o's theorem.
First extend the homeomorphism $f$ over the punctures
(if any) to reduce to the case of a homeomorphism of a closed surface which we also denote $S$. 
If there are infinitely many fixed points, there is a fixed point $p$ which is 
a limit of fixed points $p_i$. If $\gamma_i$ is a short path from $p$ to $p_i$ then evidently
$f(\gamma_i)$ is homotopic to $\gamma_i$ rel. endpoints for all sufficiently big $i$. Let
$\tilde{f}$ be a lift of $f$ to the universal cover $\tilde{S}$ of $S$ fixing some point
$\tilde{p}$ that covers $p$. Then $\tilde{f}$ has a finite power that covers the identity
on $S$, and since it fixes $\tilde{p}$ it {\em is} the identity; i.e.\/ $\tilde{f}$ is a torsion
element acting on $\tilde{S}$. Furthermore, $\tilde{f}$ fixes lifts $\tilde{p}_i$ of $p_i$ joined
to $\tilde{p}$ by lifts $\tilde{\gamma}_i$ of $\gamma_i$. But this contradicts K\'er\'ekjart\`o's
theorem.
\end{proof}

\begin{thm}\label{thm: normal gen by tor}
Let $S$ be a surface of finite type, and let $K$ be a 
Cantor set in $S$. Then $\Mod(S,K)$ is generated by torsion unless $S$ has genus two and $5k+4$ punctures for some $k\ge0$. 
Moreover, a torsion element $g$
in $\Mod(S,K)$ normally generates $\Mod(S,K)$ if and only if its image in $\Mod(S)$ normally
generates $\Mod(S)$.
\end{thm}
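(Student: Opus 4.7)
I would prove the two assertions independently. The ``moreover'' clause is a direct consequence of the earlier lemmas: for any nontrivial torsion element $g\in\Mod(S,K)$, Lemma~\ref{lemma: PMap torsion-free} forces $g\notin\PMod(S,K)$, so the hypothesis of Lemma~\ref{lemma: normal gen criterion} is satisfied with $T=\{g\}$. Hence if $\pi(g)$ normally generates $\Mod(S)$ then $g$ normally generates $\Mod(S,K)$. The converse is automatic from the surjectivity of $\pi$.

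The generation-by-torsion claim is where the real work lies. Let $N\subset\Mod(S,K)$ be the subgroup generated by all torsion elements. Since conjugation preserves order, $N$ is normal in $\Mod(S,K)$. Once we exhibit any nontrivial torsion in $\Mod(S,K)$ (produced, say, by a finite-order symmetry of $S$ arranged via change of coordinates to preserve $K$), Lemma~\ref{lemma: PMap torsion-free} implies $N\not\subset\PMod(S,K)$. Theorem~\ref{theorem:pure} then forces $N\supset\ker\pi$, so $N=\pi^{-1}(\pi(N))$ and it suffices to show $\pi(N)=\Mod(S)$.

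To this end, invoke the classical generation-by-torsion result for finite-type mapping class groups (due to work of Maclachlan, Korkmaz, Luo, Monden, and others): $\Mod(S)$ is generated by finite-order elements exactly when $S$ is not of the exceptional form $S_{2,\,5k+4}$. Fix torsion generators $g_1,\ldots,g_r$ of $\Mod(S)$, each realized (by Nielsen realization, or Kerékjártó in low genus) as a periodic orientation-preserving homeomorphism $\phi_i$ of $S$. Since $\phi_i$ has only finitely many fixed points, choose a small disk $D_i$ disjoint from them whose iterates $\phi_i^j(D_i)$ are pairwise disjoint; then a tame Cantor set in $D_i$ together with its $\phi_i$-orbit yields a $\phi_i$-invariant tame Cantor set $K_i\subset S$. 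The change-of-coordinates principle for tame Cantor sets in surfaces of finite type supplies a homeomorphism $h_i$ of $S$ with $h_i(K_i)=K$. The conjugate $h_i\phi_ih_i^{-1}$ is periodic, preserves $K$, and therefore represents a torsion element $\tilde g_i\in\Mod(S,K)$ whose image $\pi(\tilde g_i)$ is conjugate to $g_i$ in $\Mod(S)$.

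Since $N$ is normal, $\pi(N)$ is a normal subgroup of $\Mod(S)$ containing a conjugate of each $g_i$; as the $g_i$ generate $\Mod(S)$, their normal closure is all of $\Mod(S)$, giving $\pi(N)=\Mod(S)$ and hence $N=\Mod(S,K)$. The main obstacle is locating the precise form of the generation-by-torsion theorem for $\Mod(S)$ in the literature and justifying the genus-two exception, which reflects an $H_1$-obstruction specific to $S_{2,\,5k+4}$. A secondary technicality is ensuring the ad hoc invariant Cantor sets $K_i$ are tamely embedded in $S$, so that change of coordinates furnishes a global homeomorphism of $S$ (and not merely of a subsurface) carrying $K_i$ onto the given $K$.
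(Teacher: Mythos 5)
Your proposal is correct and follows essentially the same route as the paper: both arguments reduce the problem via Lemma~\ref{lemma: PMap torsion-free} and the Purity Theorem (packaged in the paper as Lemma~\ref{lemma: normal gen criterion}), lift torsion generators of $\Mod(S)$ to $\Mod(S,K)$ by Nielsen realization together with taking the orbit of a Cantor set under a periodic representative, and then invoke the Luo--Korkmaz generation-by-torsion theorem with its genus-two exception. The only difference is cosmetic: you re-derive the normal-generation criterion inline rather than quoting the lemma.
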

\begin{rmk}
By \cite[Thm.~1.1 and Prop.~3.3]{Lanier_Margalit}, 
when $S$ is closed and has genus at least $3$, a torsion element of $\Mod(S)$ normally generates 
$\Mod(S)$ if and only if it is not a hyperelliptic involution. 
In this sense, most torsion elements normally generate the entire mapping class group.
\end{rmk}

\begin{proof}[Proof of Theorem \ref{thm: normal gen by tor}]
The second assertion directly follows from Lemmas \ref{lemma: normal gen criterion} and \ref{lemma: PMap torsion-free}.
	
For the first assertion, let $T$ be the set of all torsion elements in $\Mod(S,K)$. 
In view of Lemmas \ref{lemma: normal gen criterion} and \ref{lemma: PMap torsion-free}, 
it suffices to check that under the forgetful map, the image of $T$
generates $\Mod(S)$. We claim that $\pi(T)$ is exactly the set of torsion elements 
in $\Mod(S)$. For, each torsion element $g$ in $\Mod(S)$ is represented (by Nielsen realization)
by a periodic homeomorphism $f$ of $S$. If $K' \subset S$ is any Cantor set, so is the union $K$ of its
orbits under the (finite) set of powers of $f$. Thus $f$ permutes some Cantor set $K$ in $S$
and represents a finite order element of $\Mod(S,K)$ mapping to $g$.

For any finite type surface $S$, the group $\Mod(S)$ is generated by torsion elements unless $S$ has genus two and $5k+4$ punctures for some $k\ge0$; see \cite{Luo} and \cite{Korkmaz}. This completes the proof of the first assertion.
\end{proof}

Similar results have been obtained independently by Justin Malestein and Jing Tao for surfaces of infinite genus when the space of ends has certain self-similar structure \cite{Malestein_Tao}.

\subsection{Proof of the Purity Theorem when $S$ has at most one puncture}\label{subsec: Purity one puncture}

We shall prove the Purity Theorem in the case where $S$ has exactly one puncture, denoted $\infty$.
The case of closed $S$ may be deduced from this case by the following Lemma \ref{lemma: closed case}. 
Let's fix notation $\Sigma = S - K$, and define $\Shat\defeq S\cup\{\infty\}$ and 
$\Sigmahat\defeq\Sigma\cup\{\infty\}=\Shat - K$. There is a canonical isomorphism
of $\Mod(S,K)$ with $\Mod(\Sigma)$; throughout the remainder of this section we usually 
discuss $\Mod(\Sigma)$.

\begin{lemma}\label{lemma: closed case}
	Suppose every normal subgroup $N$ of $\Mod(\Sigma)$ either lies in $\PMod(\Sigma)$ or 
	contains the kernel of $\Mod(\Sigma) \to \Mod(S)$.
	Then every normal subgroup $\widehat{N}$ of $\Mod(\Sigmahat)$ either lies in $\PMod(\Sigmahat)$ 
	or contains the kernel of $\Mod(\Sigmahat) \to \Mod(\Shat)$.
\end{lemma}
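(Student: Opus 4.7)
The plan is to pull $\widehat{N}$ back along the ``forget $\infty$'' map $\pi\col\Mod(\Sigma)\to\Mod(\Sigmahat)$ and apply the hypothesis for $\Sigma$ there. By a Birman-type exact sequence
\[
1\to\pi_1(\Sigmahat,\infty)\to\Mod(\Sigma)\to\Mod(\Sigmahat)\to 1,
\]
the map $\pi$ is surjective, so the preimage $N\defeq\pi^{-1}(\widehat{N})$ is normal in $\Mod(\Sigma)$. By hypothesis, either $N\subset\PMod(\Sigma)$ or $N$ contains $K_\Sigma\defeq\ker(\Mod(\Sigma)\to\Mod(S))$. The two alternatives should push forward along $\pi$ to the two desired alternatives for $\widehat{N}$.

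The first case is essentially automatic: forgetting the puncture at $\infty$ has no effect on the action on $K$, so $\pi$ carries $\PMod(\Sigma)$ into $\PMod(\Sigmahat)$, and since $\pi(N)=\widehat{N}$ by surjectivity, $\widehat{N}\subset\PMod(\Sigmahat)$.

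For the second case, set $K_{\Sigmahat}\defeq\ker(\Mod(\Sigmahat)\to\Mod(\Shat))$; the crux is to show $\pi(K_\Sigma)=K_{\Sigmahat}$, which immediately yields $\widehat{N}\supset K_{\Sigmahat}$. Given $\hat\beta\in K_{\Sigmahat}$, lift it to $\beta\in\Mod(\Sigma)$; by commutativity of the square of filling-in maps relating the four groups $\Mod(\Sigma),\Mod(\Sigmahat),\Mod(S),\Mod(\Shat)$, the image $\gamma$ of $\beta$ in $\Mod(S)$ lies in $\ker(\Mod(S)\to\Mod(\Shat))$, which by the classical Birman sequence is $\pi_1(\Shat,\infty)$ (acting by point-pushing $\infty$). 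I then adjust $\beta$ by an element $\alpha\in\pi_1(\Sigmahat,\infty)=\ker(\pi)$ whose image in $\Mod(S)$ is also $\gamma$, so that $\beta\alpha^{-1}\in K_\Sigma$ while $\pi(\beta\alpha^{-1})=\hat\beta$. The image of such $\alpha$ in $\Mod(S)$ is precisely the point-push around the class of $\alpha$ under the inclusion-induced map $\pi_1(\Sigmahat,\infty)\to\pi_1(\Shat,\infty)$, and this map is surjective because the Cantor set $K$ is nowhere dense in $\Shat$ (every loop based at $\infty$ can be perturbed off $K$ by general position).

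The main obstacle is technical rather than conceptual: one must confirm that the relevant Birman exact sequence for filling in a single puncture, and the surjectivity of the Cantor-set filling-in maps $\Mod(\Sigma)\to\Mod(S)$ and $\Mod(\Sigmahat)\to\Mod(\Shat)$, are all available in this infinite-type context. These are standard in the theory of big mapping class groups, but should be invoked explicitly; once they are in hand, the diagram chase above goes through cleanly.
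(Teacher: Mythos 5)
Your proposal is correct and follows essentially the same route as the paper: pull $\widehat{N}$ back along the point-pushing Birman exact sequence for $\infty$, apply the hypothesis, and in the non-pure case show that the forgetful map carries $\ker(\Mod(\Sigma)\to\Mod(S))$ onto $\ker(\Mod(\Sigmahat)\to\Mod(\Shat))$ by lifting a point-push of a loop in $\Shat$ to a loop in $\Sigmahat$ (using that loops can be pushed off the Cantor set) and correcting the lift by it. The only cosmetic difference is that you treat the pure case directly where the paper argues contrapositively.
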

\begin{proof}
Let's denote the kernel of the forgetful map $\pi_S:\Mod(\Sigma) \to \Mod(S)$ by
$\Mod(\Sigma)_0$, and likewise the kernel of the forgetful map $\pi_{\Shat}:\Mod(\Sigmahat) \to \Mod(\Shat)$ by
$\Mod(\Sigmahat)_0$.

We have the following commutative diagram of short exact sequences, where each row is the
Birman exact sequence obtained by point-pushing $\infty$:
	$$
	\begin{CD}
		1 @>>> 	\pi_1(\Sigmahat)	@>>>	\Mod(\Sigma)		@>p>>	\Mod(\Sigmahat)	@>>>	1\\
		@. 		@VVV						@V\pi_{S}VV				@V\pi_{\Shat}VV				@.\\
		1 @>>>  \pi_1(\Shat)		@>>>	\Mod(S)	@>>>	\Mod(\Shat) @>>>	1,
	\end{CD}
	$$
and	where the vertical maps $\pi_S$ and $\pi_{\Shat}$ are the forgetful homomorphisms.
	
	We first show that $p(\Mod(\Sigma)_0)=\Mod(\Sigmahat)_0$. For any $\hat{g}\in \Mod(\Sigmahat)_0$, 
	let $g\in \Mod(\Sigma)$ be any element with $p(g)=\hat{g}$. 
	Then $\pi_{S}(g)$ must come from point-pushing infinity around a loop $\gamma\in\pi_1(\Shat)$. 
	We can lift the isotopy class of $\gamma$ to some $\tilde{\gamma}\in \pi_1(\Sigmahat)$. 
	Then as a mapping class in $\Mod(\Sigma)$, $\tilde{\gamma}$ 
	and $g$ have the same image under $\pi_S$. Hence $\tilde{\gamma}^{-1}\cdot g\in \Mod(\Sigma)_0$ and $p(\tilde{\gamma}^{-1}\cdot g)=\hat{g}$, Thus
	$p(\Mod(\Sigma)_0)=\Mod(\Sigmahat)_0$ as claimed.
	
	Now for any normal subgroup $\widehat{N}$ of $\Mod(\Sigmahat)$ not contained in $\PMod(\Sigmahat)$, let $N\defeq p^{-1}(\widehat{N})$. Then $N$ is not contained in $\PMod(\Sigma)$, and thus by our assumption it contains $\Mod(\Sigma)_0$. Therefore, by what we showed above, we have
	$\Mod(\Sigmahat)_0=p(\Mod(\Sigma)_0)\subset p(N)=\widehat{N}$.
\end{proof}

The remainder of this section is devoted to the proof of Theorem~\ref{theorem:pure} under
the hypothesis that $S$ is a finite genus surface with exactly one puncture. The case when $S$ has multiple punctures can be proved similarly, which we explain in Section \ref{subsec: Purity general case}.

\begin{defn}\label{def: dividing disk}
	We say a disk $D$ in $S$ is a \emph{dividing disk} if 
	both the interior and exterior of $D$ intersect $K$ while its boundary does not.
	Note that $\infty$ must lie in the exterior of a dividing disk $D$. 
\end{defn}

We say a mapping class $g$ is supported in a dividing disk $D$ if it can be realized 
as a homeomorphism that is the identity outside $D$. Denote by $\Ndiv$ the subgroup of $\Mod(\Sigma)$ 
generated by all mapping classes supported in dividing disks. Then $\Ndiv$ is normal in $\Mod(\Sigma)$.
For brevity of notation, throughout the next section we write 
$\Gamma$ for $\Mod(\Sigma)$ and $\Gamma^0$ $\Mod(\Sigma)_0$ (i.e.\/ the kernel of $\Mod(\Sigma) \to \Mod(S)$).

The proof of Theorem~\ref{theorem:pure} has two steps. First (Lemma~\ref{lemma: normal act trivially}) 
we will show the normal closure of any $g \in \Gamma - \PMod(\Sigma)$ contains $\Ndiv$. 
Second (Proposition~\ref{prop: normal closure large}) we will show that $\Ndiv = \Gamma^0$. This
will complete the proof. 

\begin{lemma}\label{lemma: normal act trivially}
	The normal closure of any $g\in \Gamma - \PMod(\Sigma)$ contains $\Ndiv$.
\end{lemma}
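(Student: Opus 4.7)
The plan is to show that for every dividing disk $D$ and every mapping class $h$ supported in $D$, $h$ lies in $\langle\langle g\rangle\rangle$; this forces $\Ndiv \subset \langle\langle g\rangle\rangle$ since by definition $\Ndiv$ is generated by such $h$'s.

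The first step is to conjugate $g$ inside $\Gamma$ (staying in $\langle\langle g\rangle\rangle$) into a shift-like position relative to a prescribed dividing disk $D$. Since $g\notin\PMod(\Sigma)$, the induced map $g|_K$ is nontrivial. Using the surjectivity of the forgetful map $\Gamma = \Mod(\Sigma)\to\Aut(K)$ (every self-homeomorphism of $K$ extends to $\Sigmahat$ fixing $\infty$) together with the rich transitivity of $\Aut(K)$ on clopen configurations of $K$, I would find $\gamma\in\Gamma$ so that $g':=\gamma g\gamma^{-1}\in\langle\langle g\rangle\rangle$ admits a representative homeomorphism for which the forward orbit $\{g'^n(D)\}_{n\ge 0}$ consists of pairwise disjoint dividing disks accumulating locally finitely to a single end of $\Sigma$ (equivalently, to a point of $K$). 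Replacing $g$ by $g'$, I assume from now on that $g$ itself has this shift-like behavior.

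The main step is an infinite-product telescoping trick. For any $h$ supported in $D$, form
\begin{equation*}
H \;:=\; \prod_{n=0}^{\infty} g^n h g^{-n}.
\end{equation*}
This is a well-defined element of $\Gamma$ because the factors $g^n h g^{-n}$ are supported in the pairwise disjoint, locally finite family $\{g^n(D)\}_{n\ge 0}$ and therefore commute. Conjugation by $g$ shifts the product:
\begin{equation*}
gHg^{-1} \;=\; \prod_{n=0}^{\infty} g^{n+1}hg^{-(n+1)} \;=\; \prod_{m=1}^{\infty} g^m h g^{-m},
\end{equation*}
so $H = h\cdot(gHg^{-1})$, which rearranges to
\begin{equation*}
h \;=\; H\,g\,H^{-1}\,g^{-1} \;=\; (HgH^{-1})\cdot g^{-1}.
\end{equation*}
The right-hand side exhibits $h$ as the product of a conjugate of $g$ with $g^{-1}$, so $h\in\langle\langle g\rangle\rangle$.

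The hardest part is the first step: arranging $g$ (up to conjugation) to have a shift-like, locally finite, infinite forward orbit on the chosen dividing disk. When $g|_K$ has infinite order, a standard change-of-coordinates argument inside $\Aut(K)$ produces the required dynamics. The case where $g|_K$ has finite order is more delicate, since every forward orbit of a clopen subset of $K$ is then eventually periodic and the infinite product as written cannot be formed directly from $g$ alone. In that case one must build a shift-like element inside $\langle\langle g\rangle\rangle$ by combining $g$ with a carefully chosen conjugate before applying the telescoping trick.
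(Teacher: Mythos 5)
Your telescoping identity is correct and is exactly the kind of commutator trick the paper uses: if $H=\prod_{n\ge0}g^nhg^{-n}$ makes sense, then $h=(HgH^{-1})g^{-1}\in\langle\langle g\rangle\rangle$. The genuine gap is your first step, which you correctly flag as the hardest part but do not close. Conjugating $g$ inside $\Gamma$ only changes the induced homeomorphism $g|_K$ by conjugation in $\Aut(K)$, so no choice of $\gamma$ can manufacture dynamics that $g|_K$ does not already have. In particular, an element $g|_K$ of \emph{infinite} order need not have any infinite orbit (partition $K$ into clopen pieces $K_n$ cyclically permuted by $g$ in blocks of size $n$, accumulating at a fixed point), so even in the infinite-order case there need not exist a dividing disk $D$ with $\{g^n(D)\}_{n\ge0}$ pairwise disjoint and locally finite, and the ``standard change-of-coordinates argument'' you invoke cannot produce one. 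The finite-order case you explicitly leave open. Since the reduction to a shift-like element is the entire content of the lemma beyond the telescoping identity, the proof as written is incomplete.

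The paper's proof sidesteps this by using the displacement of $g$ only once and putting the infinite shift into an \emph{auxiliary} element rather than into $g$. From $g\notin\PMod(\Sigma)$ one gets a single dividing disk $D$ with $D\cap g(D)=\emptyset$, and then builds a homeomorphism $h$ supported \emph{inside} $D$ shifting a nested sequence of dividing disks $D_0\supset$-free family $D_n\to x$ within $D$; such an $h$ always exists by the standardness of Cantor sets in a disk. For $f$ supported in $D_0$ one forms $x_f=\prod_{n\ge0}h^nfh^{-n}$ (well defined because $h$, not $g$, is the shift), and then $a_f=[x_f,g]\in\langle\langle g\rangle\rangle$ is supported on $\bigsqcup D_n\sqcup\bigsqcup g(D_n)$. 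A second conjugation by a suitably chosen $b$ (with $b|_D=g|_D$ and $b|_{g(D)}=h\circ g|_D^{-1}$) makes everything cancel except $f$ on $D_0$: one checks $a_f\,b a_f b^{-1}=f$. If you want to salvage your route, you would need to prove that $\langle\langle g\rangle\rangle$ always contains some element acting as a genuine shift on an infinite locally finite family of dividing disks; that is plausible but is itself a nontrivial construction (essentially a Mather/Epstein-type argument), and it is precisely what your sketch is missing.
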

\begin{proof}
	Let $x\in K$ be a point such that $x\neq g(x)$. Then there is a small closed dividing disk $D$
	containing $x$ such that $D \cap g(D)=\emptyset$. Since any embedding of a Cantor set in a disk
	is standard, there exists a sequence of dividing disks $\{D_n\}_{n\ge 0}$ inside $D$ converging to $x$
	and a homeomorphism $h$ supported in $D$ preserving $K\cap D$ and fixing $x$ such that $h(D_n)=D_{n+1}$ for all $n\ge 0$;
	See Figure \ref{fig: shrinking_disks}.
	
	\begin{figure}[hbtp]
		\labellist
		\small 
		
		\pinlabel $x$ at 205 45
		\pinlabel $D$ at 217 30
		\pinlabel $D_0$ at 108 20
		\pinlabel $h$ at 124 82
		\pinlabel $D_1$ at 151 32
		\pinlabel $h$ at 164 75
		\pinlabel $D_2$ at 188 40
		\endlabellist
		\centering
		\includegraphics[scale=1]{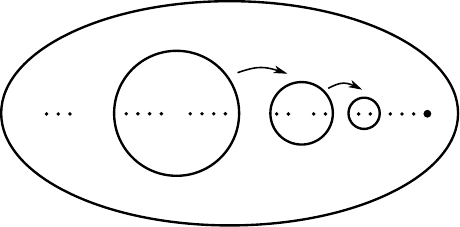}
		\caption{A sequence of dividing disks $D_n$ in $D$ converging to $x$ and 
			a mapping class $h$ supported in $D$ with $h(D_n)=D_{n+1}$}
		\label{fig: shrinking_disks}
	\end{figure}
	
	Denote $g(D)$ by $E$ and $g(D_n)$ by $E_n$. Let $b\in \Gamma$ be a mapping class 
	such that
	$b|_{D}=g|_{D}$ and $b|_{E}=h\circ (g|_{D})^{-1}=h\circ (g^{-1})|_{E}$. Such a mapping class exists since any given 
	orientation-preserving homeomorphism of the boundary of $\Sigma - \intrm(D\sqcup E)$ 
	extends to an orientation-preserving homeomorphism on all of $\Sigma$.
	
	Now let $f$ be any mapping class in $\Gamma$ supported in a dividing disk. Up to a conjugation, 
	we may assume $f$ is supported in $D_0$.
	Then the infinite product $x_f\defeq \prod_{n\ge 0} h^nfh^{-n}$ is a well defined mapping class 
	in $\Gamma$. 
	
	Furthermore $a_f\defeq[x_f,g]=(x_f g x_f^{-1})g^{-1}$ lies in $\langle\langle g\rangle\rangle$, 
	and therefore so does $a_f ba_f b^{-1}$. We claim $a_f ba_f b^{-1}=f$. 
	Indeed, $a_f$ and $ba_f b^{-1}$ are 
	supported in $(\sqcup_{n\ge 0}D_n)\sqcup(\sqcup_{n\ge 0}E_n)\sqcup\{x,g(x)\}$ and 
	$(\sqcup_{n\ge 1}D_n)\sqcup(\sqcup_{n\ge 0}E_n)\sqcup\{x,g(x)\}$ respectively. 
	Note that for all $n\ge0$, we have
	$$(ba_f b^{-1})|_{D_{n+1}}=hg^{-1}\cdot (a_f)|_{E_n}\cdot gh^{-1}=
	h\cdot (x_f^{-1})|_{D_n}\cdot h^{-1}=(a_f^{-1})|_{D_{n+1}},$$
	and
	$$(ba_f b^{-1})|_{E_n}=g\cdot (a_f)|_{D_n}\cdot g^{-1}=
	g\cdot (x_f)|_{D_n}\cdot g^{-1}=(a_f^{-1})|_{E_n}.$$
	Thus $a_f$ and $ba_f b^{-1}$ cancel out on all $D_{n+1}$ and $E_n$ for $n\ge 0$. Hence 
	$a_f ba_f b^{-1}$ is supported in $D_0$, on which it agrees with $a_f$ and thus with $f$. 
	Thus $a_f ba_f b^{-1}=f$.
\end{proof}

The next step is to show that $\Ndiv=\Gamma^0$. This will be accomplished in a series of lemmas. 
First note that $\Ndiv\subset \Gamma^0$ since each mapping class
supported in a disk becomes trivial under the forgetful map to $\Mod(S)$. 
To prove the converse, we shall use notation and terminology 
consistent with \cite{Calegari_Chen_rigidity}. Recall that a {\em short ray} in $S$ is an
isotopy class of proper embedded ray from $\infty$ to some point in $K$, and a {\em lasso} is a
homotopically essential properly embedded copy of $\R$ in $S$ from $\infty$ to $\infty$.
For any short ray $r$, let $\Gamma_r$ be the stabilizer of $r$ and 
let $\Gamma_{(r)}$ be the subgroup of mapping classes that are the identity in a neighborhood of $r$. 

When $S=\R^2$, Lemmas 6.6 and 6.8 from \cite{Calegari_Chen_rigidity} show that the 
normal closure of $\Gamma_{(r)}$ is equal to $\Gamma$. This proves $\Ndiv=\Gamma=\Gamma^0$ 
in the special case of $S=\R^2$ since each mapping class in $\Gamma_{(r)}$ 
is supported in a dividing disk in this situation. 

For general $S$, for any (finite) collection $L$ of disjoint lassos on $\Sigma$ 
that are disjoint from $r$ and cut $S$ into a single disk, 
let $\Gamma_{L,(r)}$ denote the subgroup of $\Gamma_{(r)}$ consisting of mapping classes 
that fix the isotopy class of each lasso in $L$. 
We say that such a collection $L$ is \emph{filling} (with respect to $r$).

\begin{lemma}\label{lemma: GammaL(r) contained in N}
	For each filling collection $L$, the group $\Gamma_{L,(r)}$ is a subgroup of $\Ndiv$.
\end{lemma}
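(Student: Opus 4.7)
The plan is to show that every element of $\Gamma_{L,(r)}$ can be realized as a homeomorphism supported in a single dividing disk, which immediately puts it in $\Ndiv$. The filling hypothesis on $L$ is precisely what enables this reduction.

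Given $g \in \Gamma_{L,(r)}$, I would first pick a homeomorphism representative $h$ of $g$ that, regarded as a homeomorphism of $\Shat$ fixing $\infty$, is the identity on an open neighborhood $V$ of $L\cup r\cup\{\infty\}$. The condition $g\in\Gamma_{(r)}$ already provides trivial behavior near $r$; since $h$ preserves each lasso of $L$ up to isotopy and $L$ is disjoint from $r$, a standard ambient isotopy supported away from $r$ straightens $h$ to be identity on a neighborhood of each lasso; and finally, since $h$ is then identity on all arcs of $L\cup r$ meeting $\infty$, one further isotopy makes $h$ identity on a small neighborhood of $\infty$ (the complementary sectors near $\infty$ are disks bounded by arcs on which $h$ is already identity). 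After shrinking, $V$ can be chosen so that $\partial V$ is disjoint from $K$: $L$ is disjoint from $K$, a small enough neighborhood of $\infty$ is disjoint from $K$, and since $K$ is totally disconnected we may take a small disk around $x \defeq r \cap K$ with boundary missing $K$.

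Set $D \defeq \Shat\setminus V$. Because $L$ cuts $S$ into a single disk, the graph $L\cup r\cup\{\infty\}$ cuts $\Shat$ into a single open disk (slitting the filling polygon of $L$ further along $r$ preserves simple connectivity), so $D$ is a closed topological disk in $\Shat$, contained in $S$, with boundary a single simple closed curve. I then verify the dividing conditions from Definition~\ref{def: dividing disk}: $\partial D\cap K=\emptyset$ by choice of $V$; $\intrm(D)\cap K=K\setminus V$ is the Cantor set $K$ minus a small clopen neighborhood of $x$, hence itself a Cantor set; and the exterior $S\setminus D$ contains $x$ together with a neighborhood of the end $\infty$. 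Since $h$ is identity on $V=\Shat\setminus D$, the class $g$ is supported in the dividing disk $D$, hence $g\in\Ndiv$.

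The main obstacle is justifying that $D=\Shat\setminus V$ really is a single closed disk in $S$ with simple closed curve boundary: this is where the filling condition on $L$ is essential, and it requires choosing the regular neighborhoods of $L$, $r$, and $\{\infty\}$ compatibly so that $V$ is a genuine regular neighborhood in $\Shat$ of the embedded graph $L\cup r\cup\{\infty\}$. The remaining steps (straightening $h$, and arranging that boundaries miss $K$) are routine consequences of standard ambient isotopy lemmas for mapping classes fixing disjoint curves up to isotopy, together with the fact that the Cantor set $K$ is nowhere dense in $S$.
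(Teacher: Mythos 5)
Your proof is correct and takes essentially the same approach as the paper's: the paper likewise represents $g$ by a homeomorphism supported in the disk $D_1$ cut out by $L$ minus a suitable disk neighborhood $D_2$ of $r$ --- which is exactly your complement of a regular neighborhood of $L\cup r\cup\{\infty\}$ --- and observes that $D_1-D_2$ is a dividing disk. The only difference is that you spell out the straightening isotopy and the verification of the dividing-disk conditions, which the paper leaves implicit.
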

\begin{proof}
	By definition, the lassos in $L$ cut $S$ into a disk $D_1$ 
	which contains the ray $r$. Each mapping class in $\Gamma_{L,(r)}$ 
	is represented by a homeomorphism $h$ that is supported in $D_1$ and 
	is the identity in an open disk neighborhood $D_2\subset D_1$ of $r$. 
	Thus $h$ is supported in $D_1-D_2$, which is a dividing disk 
	if we choose $D_2$ suitably; see Figure \ref{fig: dividingsupp}. 
	Hence each mapping class of $\Gamma_{L,(r)}$ lies in $\Ndiv$.
\end{proof}

\begin{figure}[hbtp]
	\labellist
	\pinlabel $\infty$ at -8 0
	\pinlabel $D_2$ at 65 45
	\pinlabel $r$ at 26 22
	\pinlabel $D_1-D_2$ at 153 32
	\pinlabel $\ell_2$ at -10 110
	\pinlabel $\ell_1$ at 120 223
	\pinlabel $L=\{\ell_1,\ell_2\}$ at 300 110
	\endlabellist
	\centering
	\includegraphics[scale=0.6]{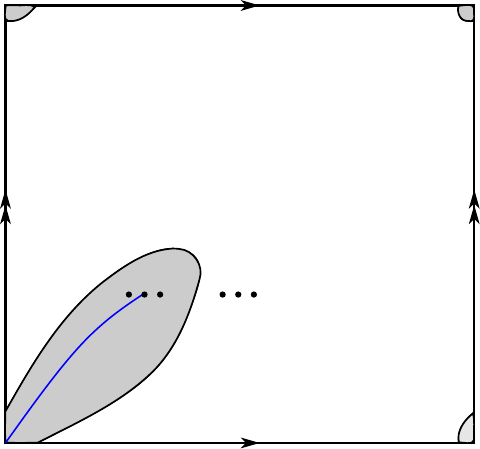}
	\caption{The dividing disk $D_1 - D_2$ for a suitable choice of $D_2$ in the case of $S=T^2-\infty$}
	\label{fig: dividingsupp}
\end{figure}

Our strategy to prove $\Gamma^0\subset \Ndiv$ is to show that these subgroups $\Gamma_{L,(r)}$ of $\Ndiv$ together generate $\Gamma^0$ as we vary $L$ and $r$. This is accomplished in two steps: First we show that as we vary $L$ fixing $r$, these subgroups generate $\Gamma_{(r)}\cap \Gamma^0$ in Lemma \ref{lemma: vary L}; Second we show that as we vary $r$, the subgroups $\Gamma_{(r)}\cap \Gamma^0$ generate $\Gamma^0$ using Lemmas \ref{lemma: (r) to r} and \ref{lemma: generate Gamma^0}.

\begin{lemma}\label{lemma: vary L}
	For any simple ray $r$, the group generated by all $\Gamma_{L,(r)}$ over all collections of
	lassos $L$ filling with respect to $r$, is equal to $\Gamma_{(r)} \cap \Gamma^0$.
\end{lemma}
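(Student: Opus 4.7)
The containment $\langle\bigcup_L \Gamma_{L,(r)}\rangle \subseteq \Gamma_{(r)}\cap\Gamma^0$ is immediate: by construction each $\Gamma_{L,(r)}$ lies in $\Gamma_{(r)}$, and by Lemma~\ref{lemma: GammaL(r) contained in N} each $\Gamma_{L,(r)}$ lies in $\Ndiv\subseteq\Gamma^0$. The content of the lemma is the reverse inclusion.

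My plan for the reverse is as follows. Fix any filling collection $L_0=\{\ell_1,\dots,\ell_{2g}\}$ of lassos disjoint from $r$; I will show that every $g\in\Gamma_{(r)}\cap\Gamma^0$ can be modified by a finite product of elements from various $\Gamma_{L',(r)}$ to land in $\Gamma_{L_0,(r)}$. First, realize $g$ by a homeomorphism $\phi$ of $\Sigma$ that is the identity on a neighborhood of $r$. Using that $g\in\Gamma^0$, that $S$ has finite type, and that $K$ avoids a neighborhood of each puncture of $S$, I would further isotope $\phi$ through $\Mod(\Sigma)$ so that it is supported in a compact subsurface $V\subset S$.

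Next, for each $\ell_i\in L_0$, since $g\in\Gamma^0$ the image $\phi(\ell_i)$ is isotopic to $\ell_i$ in $S$; in $\Sigma$ they differ by a ``correction element'' $w_i$ in the kernel of $\pi_1(\Sigma)\to\pi_1(S)$. The key technical claim is that, once $\phi$ is compactly supported in $S$, the element $w_i$ can be represented by a \emph{finite} product of loops around points of $K\cap V$ --- namely the finitely many $K$-points whose trajectories under an ambient isotopy of $\phi$ to $\mathrm{id}_S$ cross $\ell_i$. Each such letter can then be cancelled by a single point-push $\tau$ of the offending $K$-point along a small null-homotopic loop $\gamma\subset S$ disjoint from $r$: $\tau$ is supported in a small disk $D\subset S$ disjoint from $r$, and since $D$ is null-homotopic in $S$ there is a filling collection $L_\tau$ of $2g$ lassos in $S\setminus D$ disjoint from $r$, placing $\tau\in\Gamma_{L_\tau,(r)}$. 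After finitely many such corrections, the modified element $\phi\cdot\prod\tau^{-1}$ fixes every $\ell_i\in L_0$ in $\Sigma$ and hence lies in $\Gamma_{L_0,(r)}$, exhibiting $g$ as a product of elements in various $\Gamma_{L,(r)}$.

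The main obstacle will be justifying the finiteness of each $w_i$ in the face of the infinite Cantor subset $K\cap V$: only those $K$-points whose isotopy trajectory actually crosses the compact arc $\ell_i\cap V$ contribute, and with an appropriately chosen isotopy one expects only finitely many. A secondary technical point is the compact-support reduction of $\phi$ in $S$, which succeeds because $K$ is bounded away from the finitely many punctures of $S$, so $\phi$ can be made the identity near every puncture via a $K$-preserving isotopy before collapsing its support to a compact subsurface $V$.
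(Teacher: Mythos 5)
Your easy containment is fine, and the overall shape of your plan --- correct $g$ one lasso at a time by multiplying with elements of various $\Gamma_{L,(r)}$ until it lands in $\Gamma_{L_0,(r)}$ --- is the same skeleton the paper uses. But your ``key technical claim'' is false, and it is not a technical obstacle that a cleverly chosen isotopy can remove: it is the whole difficulty of the lemma. Since $g\in\Gamma^0$, the lassos $\ell_1$ and $g\ell_1$ are isotopic in $S-r$ and (after an innermost-bigon reduction when they intersect) cobound a bigon $B$ in $S-r$; the obstruction to their being isotopic in $\Sigma$ is precisely $K\cap B$. Because $K$ is perfect, $K\cap B$ is an open subset of $K$ and hence is either empty (in which case no correction is needed) or uncountable --- it is never a nonempty finite set. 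Consequently the discrepancy $w_i$ is a loop enclosing an entire Cantor subset of $K$, which is not homotopic in $\Sigma$ to any finite product of loops around individual points of $K$, and no finite sequence of single-point pushes can cancel it. The finiteness you hope to extract from compact support cannot materialize: compactness of the support in $S$ is essentially automatic (since $K$ is compact) and says nothing about how many points of $K$ the sweep of $\ell_i$ must cross.

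The paper's proof confronts this head-on. In the disjoint case it chooses three auxiliary lassos $\gamma_a,\gamma_b,\gamma_c$ inside $B$ cutting $B$ and $K\cap B$ into four pieces, and builds two homeomorphisms $h_1\in\Gamma_{L,(r)}$ and $h_2\in\Gamma_{L_g,(r)}$ (for two \emph{different} filling collections $L$ and $L_g$, one containing $\ell_1$ and one containing $g\ell_1$) which shuffle these Cantor pieces so that $h_1g\ell_1=\gamma_b=h_2\ell_1$; then $g_1=h_2^{-1}h_1g$ fixes $\ell_1$. An induction on geometric intersection number handles the case where $\ell_1$ and $g\ell_1$ intersect, and a further induction over $\ell_2,\dots,\ell_n$ completes the factorization. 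So the correcting elements must be homeomorphisms rearranging Cantor subsets of $K$ within the bigon, supported away from a filling collection --- not point-pushes of finitely many punctures. To repair your argument you would need to replace the point-push step with such a construction, at which point you have reproduced the paper's proof.
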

\begin{proof}
	Every element of $\Gamma_{L,(r)}$ is supported in a disk and thus is trivial 
	as a mapping class on the surface $S$. Hence each $\Gamma_{L,(r)}$ is a subgroup of $\Gamma_{(r)}\cap \Gamma^0$.
	
	Conversely, it suffices to factorize an arbitrary mapping class $g$ of 
	$\Gamma_{(r)}\cap \Gamma^0$ as the product of finitely many mapping classes in $\Gamma_{(r)}$,
	each of which fixes a filling collection of lassos with respect to $r$. 
	
	Pick an arbitrary filling collection $L_0=\{\ell_1,\cdots,\ell_n\}$ with respect to $r$. Then $g\ell_1$ is isotopic to $\ell_1$ on $S$ minus a neighborhood of $r$ since $g\in \Gamma_{(r)}\cap \Gamma^0$. 
	
	\begin{claim}\label{claim: factorization}
		There is a factorization $g=h_1g_1$, where $g_1\in\Gamma_{(r)}\cap \Gamma^0$ fixes $\ell_1$ 
		and $h_1$ is the product of finitely many mapping classes in $\Gamma_{(r)}$ each of
		which fixes a filling collection of lassos with respect to $r$. 
	\end{claim}
	\begin{proof}
		First consider the case where $g\ell_1$ and $\ell_1$ are disjoint. 
		Then they cobound some open bigon $B$ in $S - r$ since they are isotopic. 
		We may assume that $K\cap B$ is nonempty and thus is also a Cantor set, 
		since otherwise $g\ell_1$ and $\ell_1$ are isotopic in $\Sigma$ and $g$ itself 
		can be represented by a homeomorphism fixing $\ell_1$. 
		Also note that $K\cap B$ is a proper subset of $K$ because the endpoint of $r$ (which lies in $K$)
		is outside $B$. 
		We can choose disjoint lassos $\gamma_a,\gamma_b,\gamma_c$ in $B$ that cut both $B$ and $K\cap B$ into four parts, where $\gamma_b$ is in the middle, $\gamma_a$ is closer to $g\ell_1$ and $\gamma_c$ is closer to $\ell_1$; see the left of Figure \ref{fig: cutbigon}. 
		We can choose lassos $\ell'_2,\cdots,\ell'_n$ on $\Sigma$ such that each $\ell'_i$ is disjoint from $\ell_1,g\ell_1$ and homotopic to $\ell_i$ in $S - r$.
		Then $L=\{\ell_1,\ell'_2,\cdots,\ell'_n\}$ is a filling collection, which cuts $\Sigma$ into a disk $D$ minus a Cantor set.
		There is a homeomorphism $h_1$ preserving $K$ and fixing each lasso in $L$ (in particular $\ell_1$) such that 
		\begin{itemize}
			\item $h_1$ is the identity in a neighborhood of $r$; and
			\item $h_1g\ell_1=\gamma_b$ and $h_1\gamma_b=\gamma_c$.
		\end{itemize}
		Refer to Figure \ref{fig: cutbigon} to see the effect of $h_1$ in an example, where the disks $D_1$ and $D_2$ are used to track how the Cantor subset outside $B$ changes under $h_1$.
		Then $h_1\in \Gamma_{L,(r)}$. Similarly, $L_g=\{g\ell_1,\ell'_2,\cdots,\ell'_n\}$ is a filling collection, and there is a homeomorphism $h_2\in \Gamma_{L_g,(r)}$ such that 
		$h_2\ell_1=\gamma_b$ and $h_2\gamma_b=\gamma_a$.
		Now $g_1\defeq h_2^{-1}h_1g$ preserves $\ell_1$ and $g_1\in\Gamma_{(r)}\cap \Gamma^0$, hence $g=(h_1^{-1} h_2)\cdot g_1$ is the desired factorization.
	
	\begin{figure}[hbtp]
		\labellist
		\pinlabel $\infty$ at -8 0
		\pinlabel $r$ at 20 42
		\pinlabel $D_1$ at 45 85
		\pinlabel $D_2$ at 65 85
		\pinlabel $g\ell_1$ at 150 172
		\pinlabel $\gamma_a$ at 170 112
		\pinlabel $\gamma_b$ at 160 82
		\pinlabel $\gamma_c$ at 160 47
		\pinlabel $\ell_1$ at 120 223
		\pinlabel $\ell_1$ at 120 -8
		\pinlabel $\ell'_2$ at -10 113
		\pinlabel $\ell'_2$ at 240 113
		\pinlabel $B$ at 160 18
		
		\pinlabel $\infty$ at 309 0
		\pinlabel $r$ at 335 42
		\pinlabel $h_1D_1$ at 372 90
		\pinlabel $h_1D_2$ at 462 150
		\pinlabel $h_1g\ell_1=\gamma_b$ at 510 72
		\pinlabel $h_1\gamma_b=\gamma_c$ at 527 53
		\pinlabel $h_1\gamma_c$ at 528 32
		\pinlabel $\gamma_c$ at 160 47
		\pinlabel $\ell_1$ at 437 223
		\pinlabel $\ell_1$ at 437 -8
		\pinlabel $\ell'_2$ at 307 113
		\pinlabel $\ell'_2$ at 557 113
		\pinlabel $h_1B$ at 465 12
		\endlabellist
		\centering
		\includegraphics[scale=0.8]{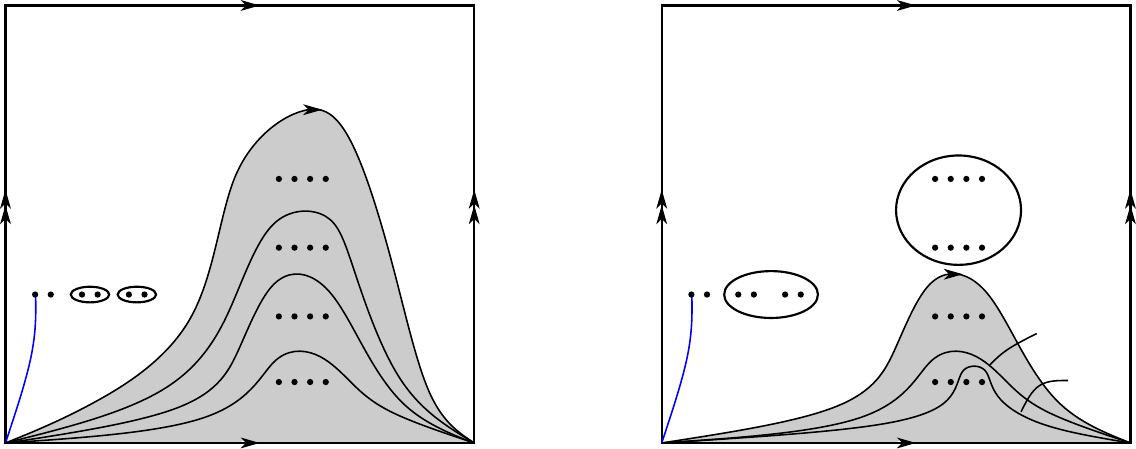}
		\caption{A choice of lassos $\gamma_a$, $\gamma_b$, $\gamma_c$ in the case $S=T^2 -\infty$ and the effect of the homeomorphism $h_1$}
		\label{fig: cutbigon}
	\end{figure}
	
		Suppose $g\ell_1$ and $\ell_1$ intersect. Put them in minimal position as lassos on 
		$\Sigma$. Then they intersect finitely many times, and there is an innermost disk $B$ 
		in $S$ which is a bigon bounded by arcs $\alpha\subset \ell_1$ and $\alpha_g\subset g\ell_1$. 
		Then $B\cap K$ is nonempty. Note that if we replace $\alpha_g$ by $\alpha$ to modify 
		$g\ell_1$ into a lasso $\ell'_1$, then $\ell'_1$ has a smaller intersection number with $\ell_1$. 
		
		We can decompose $B$ into four parts as in the previous case, and obtain a 
		factorization $g=h'g'$, where $g'\in \Gamma_{(r)}\cap \Gamma^0$ has the 
		property that $g'\ell_1=\ell'_1$, and $h'$ is a product of two mapping classes 
		in $\Gamma_{(r)}$ that each fix a filling collection with respect to $r$. 
		Since $g'\ell_1=\ell'_1$ has a smaller intersection number with $\ell_1$, by induction 
		we can reduce to the disjoint case treated earlier and complete the proof of Claim \ref{claim: factorization}.
	\end{proof}
	
	We continue the proof of Lemma~\ref{lemma: vary L}.
	Now, $g_1$ fixes $\ell_1$. Furthermore, both $g_1\ell_2$ and $\ell_2$ are disjoint from $\ell_1=g\ell_1$, 
	and arguing as in the proof of Claim \ref{claim: factorization} we obtain a factorization $g_1=h_2g_2$, 
	where $g_2\in \Gamma_{(r)}\cap \Gamma^0$ fixes both $\ell_1$ and $\ell_2$, and $h_2$ is 
	the product of finitely many mapping classes in $\Gamma_{(r)}$, each fixing a filling 
	collection (containing $\ell_1$) with respect to $r$. Continuing inductively, 
	we obtain a factorization $g=h_1h_2\cdots h_n g_n$ where $g_n\in \Gamma_{(r)}\cap \Gamma^0$ fixes each $\ell_i$, and each $h_i$ is a product of finitely many mapping classes in $\Gamma_{(r)}$ that each fixes a filling collection with respect to $r$,
	where $1\le i\le n$. Hence $g_n\in \Gamma_{L_0,(r)}$, which completes the proof.
\end{proof}

In what follows, whenever we say two short rays are disjoint we require them to have distinct endpoints in $K$.
\begin{lemma}\label{lemma: transitive on disjoint rays}
	For any short ray $r$, the group $\Gamma_r\cap \Gamma^0$ acts transitively on the set of
	short rays disjoint from $r$. 
\end{lemma}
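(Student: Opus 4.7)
The plan is to reduce to the case where $r_1, r_2$ are pairwise disjoint from each other and from $r$, and then construct an explicit element $\phi \in \Gamma_r \cap \Gamma^0$ with $\phi(r_1) = r_2$ supported in a disk containing $r_1 \cup r_2$.

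For the reduction, I would first find an intermediate short ray $r_3$ whose interior is disjoint from $r \cup r_1 \cup r_2$ and whose endpoint $y \in K$ is distinct from the endpoints $x, x_1, x_2$ of $r, r_1, r_2$. Such an $r_3$ exists: the complement $\Shat \setminus (r \cup r_1 \cup r_2)$ is a connected open surface (the tripod is contractible), and $K \setminus \{x, x_1, x_2\}$ is a nonempty (in fact dense) subset of it, so one picks any $y$ in this set and an embedded arc from $\infty$ to $y$ meeting $K$ only at $y$. It then suffices to treat the disjoint case, since composing an element carrying $r_1 \to r_3$ with one carrying $r_3 \to r_2$ yields the general case.

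For the disjoint case, I would take a closed disk $D \subset \Shat$ that is a regular neighborhood of $r_1 \cup r_2 \cup r'$, where $r'$ is a short initial segment of $r$ running from $\infty$ to a point of $\partial D$; thus $D$ contains $\infty, x_1, x_2$ as interior points, $r \cap D = r'$, and $\partial D \cap K = \emptyset$. Cutting $D$ along $r'$ produces a topological disk $D'$ in which $\infty$ lies on $\partial D'$ and $r_1, r_2$ are two disjoint simple arcs from $\infty$ to interior points $x_1, x_2$. Standard change-of-coordinates for disjoint arcs in a disk, combined with homogeneity of Cantor subsets of disks, produces a homeomorphism $\phi_{D'} \in \Homeo(D', \partial D')$ with $\phi_{D'}(r_1) = r_2$, $\phi_{D'}(r_2) = r_1$, and $\phi_{D'}(K \cap D') = K \cap D'$. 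Regluing along $r'$ and extending by the identity outside $D$ gives a homeomorphism $\phi$ of $S$ that preserves $K$, fixes $r$ pointwise, and carries $r_1$ to $r_2$.

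Finally, $\phi$ lies in $\Gamma^0$ because $\phi_{D'}$ is isotopic to the identity in $\Homeo(D', \partial D')$ by the Alexander trick (this isotopy freely moves $K \cap D'$ at intermediate times, which is allowed since the target is $\Mod(S)$ rather than $\Mod(\Sigma)$); gluing this isotopy back along $r'$ and extending by the identity exhibits $\phi$ as isotopic to the identity as a homeomorphism of $S$ fixing $\infty$, so $\phi$ is trivial in $\Mod(S)$. The main obstacle I anticipate is the construction of $\phi_{D'}$ that preserves $K \cap D'$ setwise while swapping $r_1$ and $r_2$: one realizes the swap as a half-rotation of a sub-disk of $D'$ containing $r_1 \cup r_2$, chosen via Cantor-set homogeneity so that $K \cap D'$ is invariant under the corresponding involution.
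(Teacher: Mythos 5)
Your proof follows essentially the same route as the paper's: reduce to the case where the two rays are disjoint from each other by interposing an auxiliary short ray disjoint from all three, then produce the required mapping class as a homeomorphism supported in a disk neighborhood of the rays that preserves $K$, fixes $r$, and lies in $\Gamma^0$ because it is supported in a disk (Alexander trick). The one soft spot is your ``half-rotation'' mechanism for the swap --- an orientation-preserving involution of a sub-disk meeting $\partial D'$ at $\infty$ cannot fix a neighborhood of that boundary point pointwise --- but the underlying change-of-coordinates fact you need (transitivity of the boundary-fixing, $K$-preserving homeomorphisms of a disk on short rays based at a boundary point) is standard, and the paper likewise just asserts the existence of such a homeomorphism.
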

\begin{proof}
	Let $s$ and $s'$ be short rays disjoint from $r$. If $s$ and $s'$ are also disjoint, then there is a closed disk $D$ containing $\infty$ that is the union of disk neighborhoods of $r$, $s$ and $s'$, 
	so that $\partial D$ does not intersect the $K$ and $D\cap K$ is a Cantor set. 
	Then there is a homeomorphism $h$ supported in $D$ fixing $r$ and taking $s$ to $s'$.
	Thus $h$ represents a mapping class $g\in \Gamma_r\cap \Gamma^0$ with $gs=s'$.
	
	For the general case, we can choose a short ray $r'$ very close to $r$ such that $r'$ is disjoint from $r,s,s'$. Then by the above argument, there are $g,g'\in \Gamma_r\cap \Gamma^0$ such that $gs=r'$ and $g's'=r'$. Hence $g'^{-1}g\in \Gamma_r\cap \Gamma^0$ takes $s$ to $s'$.
\end{proof}

\begin{lemma}\label{lemma: (r) to r}
	For any disjoint short rays $r$ and $s$, the group $\Gamma_r\cap \Gamma^0$ is generated by $\Gamma_{(r)}\cap \Gamma^0$ and $\Gamma_{(s)}\cap\Gamma_r\cap \Gamma^0$.
\end{lemma}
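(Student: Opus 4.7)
The plan is to factor any $g\in \Gamma_r\cap \Gamma^0$ as $g=h_1h_2$ with $h_1\in \Gamma_{(r)}\cap \Gamma^0$ and $h_2\in \Gamma_{(s)}\cap\Gamma_r\cap \Gamma^0$, by constructing $h_1$ so that it matches a suitable representative of $g$ on a neighborhood of $s$ while being the identity on a neighborhood of $r$. Then $h_2\defeq h_1^{-1}g$ is automatically identity near $s$, fixes $r$, and lies in $\Gamma^0$.

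First, I would reduce to the case where $g$ also fixes $s$, by establishing a refinement of Lemma~\ref{lemma: transitive on disjoint rays}: the smaller group $\Gamma_{(r)}\cap \Gamma^0$ already acts transitively on short rays disjoint from $r$ having a prescribed $K$-endpoint. Granting this, we find $\eta\in \Gamma_{(r)}\cap \Gamma^0$ with $\eta(s)=g(s)$, and replace $g$ by $\eta^{-1}g\in \Gamma_r\cap \Gamma_s\cap \Gamma^0$. To prove the refinement, I would imitate the proof of Lemma~\ref{lemma: transitive on disjoint rays}: given disjoint short rays $s, s'$ each disjoint from $r$, first isotope $s$ in $\Sigma$ so that it agrees with $s'$ in a small neighborhood of $\infty$ (possible while avoiding $r$, since in a punctured-disk neighborhood of $\infty$ there is room to route $s$ past $r$), after which the two rays differ only inside a disk $D\subset S$ that may be chosen disjoint from a neighborhood of $r$; a homeomorphism supported in $D$ carries $s$ to $s'$ and lies in $\Gamma_{(r)}\cap \Gamma^0$.

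For the factorization when $g\in \Gamma_r\cap \Gamma_s\cap \Gamma^0$, choose disjoint closed disk neighborhoods $N_r, N_s$ of $r, s$ in $\Sigma$ with boundaries avoiding $K$, and represent $g$ by a homeomorphism $G$ fixing $r$ and $s$ pointwise. I construct $H_1$ equal to $G$ on $N_s$, equal to the identity on $N_r$, and extended over $\Sigma\setminus(N_r\cup N_s)$ as a $K$-preserving homeomorphism; such an extension exists because any orientation-preserving self-homeomorphism of the boundary circles $\partial N_r\sqcup\partial N_s$ extends over the complementary surface. The main obstacle is adjusting the extension so that $h_1\defeq[H_1]\in \Gamma^0$: a priori, $H_1$ may project to a nontrivial element of $\Mod(S)$, and one must correct this by composing with a mapping class supported in $\Sigma\setminus(N_r\cup N_s)$ that preserves $K$ and has the appropriate image in $\Mod(S)$. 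Since $K\setminus(N_r\cup N_s)$ remains a Cantor set and $S\setminus(N_r\cup N_s)$ retains any higher-genus topology of $S$ outside $N_r\cup N_s$, the complement supports mapping classes realizing any such correction via point-pushes and Dehn twists supported there, completing the factorization.
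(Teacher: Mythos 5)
Your overall strategy --- splitting $g$ along a neighborhood of $s$ rather than along the endpoint of $r$ --- is genuinely different from the paper's, and as written it has two real gaps. First, in the reduction step you invoke transitivity of $\Gamma_{(r)}\cap\Gamma^0$ on short rays disjoint from $r$ \emph{with a prescribed $K$-endpoint} in order to find $\eta$ with $\eta(s)=g(s)$; but $\Gamma^0$ is the kernel of $\Mod(\Sigma)\to\Mod(S)$, not the pure subgroup (it contains, for instance, half-twists exchanging two points of $K$ inside a small disk), so $g(s)$ need not have the same endpoint as $s$, and the refinement as stated does not produce $\eta$. (Full transitivity of $\Gamma_{(r)}\cap\Gamma^0$ on rays disjoint from $r$ is what you actually need; it is provable by your method, but it is not what you claimed.) Second, and more seriously, your splicing of $H_1$ out of $G|_{N_s}$ and $\mathrm{id}|_{N_r}$ treats $G|_{\partial N_s}$ as a \emph{self}-homeomorphism of $\partial N_s$, i.e.\ it silently assumes $G(N_s)=N_s$. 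Since points of $K$ accumulate at the endpoint $q$ of $s$, arranging this requires a small $G$-invariant clopen neighborhood of $q$ in $K$, and such a neighborhood need not exist: one can build $G\in\Gamma_r\cap\Gamma_s\cap\Gamma^0$ that shifts a bi-infinite sequence of clopen pieces of $K$ accumulating at $q$, so that every small clopen neighborhood of $q$ is moved off itself. One can repair this by extending $G|_{N_s}\sqcup\mathrm{id}|_{N_r}$ to a homeomorphism of the complement onto $S\setminus(N_r\cup G(N_s))$ that matches up the leftover Cantor pieces, but that is not the argument you wrote, and it degenerates when $K\subset N_r\cup N_s$ and $G(K\cap N_s)\neq K\cap N_s$. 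The final correction step (surjectivity of the mapping class group of the complement rel boundary onto $\Mod(S)$, realized preserving $K$) is true but is yet more machinery you would have to justify.

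All of this is avoidable, and the paper's proof shows why: it is a purely local argument at the endpoint $p$ of $r$. Realize $g$ by a homeomorphism fixing $r$ pointwise; since $K$ meets $\overline{r}$ only at $p$, away from an arbitrarily small disk $D$ about $p$ the ray has a neighborhood missing $K$, so $g$ can be isotoped to be the identity there, and the only obstruction to $g\in\Gamma_{(r)}$ is concentrated in $D$. Writing $g=g_1g_2$ with $g_2$ supported in $D$ and $g_1$ the identity on a neighborhood of $r$, one gets $g_1\in\Gamma_{(r)}\cap\Gamma^0$ and $g_2\in\Gamma_{(s)}\cap\Gamma_r\cap\Gamma^0$ simply because $D$ is disjoint from $s$. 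The asymmetry in the statement between $\Gamma_{(r)}$ and $\Gamma_{(s)}\cap\Gamma_r$ is the clue: the second factor is meant to absorb only the germ of $g$ at the endpoint of $r$, not everything $g$ does near $s$.
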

\begin{proof}
	Let $g$ be a homeomorphism fixing $r$ representing a mapping class in $\Gamma_r\cap \Gamma^0$. 
	Then $g=g_1g_2$ where $g_2$ is supported in an arbitrarily small disk neighborhood $D$ of the endpoint of $r$ and $g_1$ is the identity on $D\cup r$. Hence $g_1$ represents a mapping class in $\Gamma_{(s)}\cap\Gamma_r\cap \Gamma^0$ by choosing $D$ small enough and $g_1$ represents a mapping class in $\Gamma_{(r)}\cap \Gamma^0$.
\end{proof}

\begin{lemma}\label{lemma: generate Gamma^0}
	Let $r$ and $s$ be disjoint short rays. Then $\Gamma_r\cap \Gamma^0$ and $\Gamma_s\cap \Gamma^0$ generate $\Gamma^0$.
\end{lemma}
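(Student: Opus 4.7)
The plan is to set $H \defeq \langle \Gamma_r \cap \Gamma^0,\ \Gamma_s \cap \Gamma^0 \rangle$ and show $\Gamma^0 \subseteq H$ via a two-part strategy. First I would enlarge our collection of generating subgroups by proving that $\Gamma_t \cap \Gamma^0 \subseteq H$ for \emph{every} short ray $t$, not just $t = r, s$; this promotes the two ray stabilizers supplied by hypothesis into a full family of ray stabilizers inside $H$. Second, given an arbitrary $g \in \Gamma^0$, I would invoke Lemma~\ref{lemma: transitive on disjoint rays} applied to a carefully chosen third ray to produce an element $h \in H$ with $h g(r) = r$; then $hg \in \Gamma_r \cap \Gamma^0 \subseteq H$, and hence $g = h^{-1}(hg) \in H$.

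For the first part I would split into two cases depending on $t$. If $t$ is disjoint from $r$, then since $s$ is also disjoint from $r$, Lemma~\ref{lemma: transitive on disjoint rays} yields some $h \in \Gamma_r \cap \Gamma^0$ with $h(s) = t$; using normality of $\Gamma^0$ in $\Gamma$, conjugation gives $h(\Gamma_s \cap \Gamma^0)h^{-1} = \Gamma_t \cap \Gamma^0$, placing the latter in $H$. If $t$ is not disjoint from $r$, I would first pick an auxiliary short ray $t'$ disjoint from both $r$ and $t$, apply the previous case to conclude $\Gamma_{t'} \cap \Gamma^0 \subseteq H$, and then use Lemma~\ref{lemma: transitive on disjoint rays} applied to $t'$ (noting that both $r$ and $t$ are disjoint from $t'$) to find $h' \in \Gamma_{t'} \cap \Gamma^0 \subseteq H$ with $h'(r) = t$. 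Conjugating $\Gamma_r \cap \Gamma^0$ by $h'$ finishes the case.

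With this enlarged supply of ray stabilizers inside $H$, the second part is short. For $g \in \Gamma^0$, if $g$ stabilizes the isotopy class of $r$ we are already done since $g \in \Gamma_r \cap \Gamma^0 \subseteq H$. Otherwise I would choose a short ray $t$ disjoint from both $r$ and $g(r)$, apply Lemma~\ref{lemma: transitive on disjoint rays} to obtain $h \in \Gamma_t \cap \Gamma^0 \subseteq H$ with $h(g(r)) = r$, and conclude as above.

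The main technical obstacle is the repeated appeal to the topological existence of a short ray disjoint from a given pair of short rays (needed both in choosing $t'$ and in choosing $t$). This should follow from standard surface topology: since $K$ is a Cantor set, one can pick a new endpoint $y \in K$ avoiding the (at most two) existing endpoints, and the union of two proper arcs emanating from $\infty$ is a 1-complex whose complement in $\Shat$ has sufficient room near $\infty$ (two angular sectors) and contains infinitely many points of $K$ in at least one component, so a proper embedded arc from $\infty$ to $y$ in the complement can always be constructed.
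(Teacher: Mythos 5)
There is a genuine gap, and it is concentrated in the step you yourself flag as the ``main technical obstacle'': the claim that for any two short rays one can always find a third short ray disjoint from both. This is precisely the assertion that any two vertices of the ray graph $\Ray$ have a common neighbor, i.e.\ that $\Ray$ has diameter at most $2$. That is false: the ray graph is connected but has \emph{infinite} diameter (and is Gromov hyperbolic), by Bavard \cite{Bavard_hyperbolic}, so there exist pairs of short rays admitting no common disjoint ray. Your topological argument for existence breaks down because the complementary regions of $r \cup t$ that are adjacent to $\infty$ need not contain any point of $K$: when the two rays are in minimal position but intersect many times, all of $K$ minus the two endpoints can lie in ``bigon'' regions bounded by subarcs of $r$ and $t$ that do not meet $\infty$, and any proper arc from $\infty$ into such a region must cross $r$ or $t$. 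Since both Part 1 (the case where $t$ is not disjoint from $r$) and Part 2 (handling $g(r)$ far from $r$) rest on this existence claim, the argument as written does not go through for rays at ray-graph distance greater than $2$.

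The fix is essentially what the paper does: use only the \emph{connectivity} of $\Ray$ (which follows from a standard surgery/induction on intersection number, as for curve graphs) and induct on the distance from $x$ to $s$ along a path $x = x_0, x_1, \dots, x_n = s$ of consecutively disjoint rays, applying Lemma~\ref{lemma: transitive on disjoint rays} one edge at a time to show that $\langle \Gamma_r\cap\Gamma^0, \Gamma_s\cap\Gamma^0\rangle$ acts transitively on short rays; transitivity then gives $\Gamma^0 = \langle \Gamma_r\cap\Gamma^0, \Gamma_s\cap\Gamma^0\rangle$ exactly as in your closing step. Your overall architecture (enlarge the family of ray stabilizers inside $H$ by conjugation, then reduce an arbitrary $g\in\Gamma^0$ to a ray stabilizer) is sound and would survive this repair; only the ``one auxiliary ray suffices'' shortcut must be replaced by the inductive walk through the ray graph.
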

\begin{proof}
	Let $\Gamma^0(r,s)$ be the group generated by $\Gamma_r\cap \Gamma^0$ and $\Gamma_s\cap \Gamma^0$. 
	It suffices to show that $\Gamma^0(r,s)$ acts transitively on the set of short rays. Here is
	why. For any $\gamma\in \Gamma^0$, transitivity implies that there is 
	$g\in \Gamma^0(r,s)$ such that $gr=\gamma r$ and thus $\gamma=gh$ where 
	$h\defeq g^{-1}\gamma\in \Gamma_r\cap \Gamma^0\subset \Gamma^0(r,s)$.
	
	Now we prove transitivity.
	We show for any short ray $x$ there is some $g\in \Gamma^0(r,s)$ with $gx=s$ by induction 
	on the distance between $x$ and $s$ in the \emph{ray graph} $\Ray$. 
	The vertex set of $\Ray$ is the set of short rays, and two vertices are connected by an edge if they are represented by disjoint short rays. 
	The ray graph is connected: any two short rays with distinct endpoints have a finite intersection number, and the result follows from a classical argument proving connectivity of curve graphs of surfaces of finite type; 
	see the proof of \cite[Theorem 4.3]{Farb_Margalit} for an example.
	
	Hence there is a geodesic path in the ray graph $\Ray$ connecting $x$ and $s$, 
	i.e. there is a minimal integer $n$ and short rays $x_0=x, x_1,\cdots, x_n=s$ such that adjacent short rays are disjoint. 
	For the base case $n=1$ where $x$ and $s$ are disjoint, 
	since $\Gamma_s\cap \Gamma^0$ acts transitively on rays disjoint from $s$ by Lemma \ref{lemma: transitive on disjoint rays}, 
	there is some $h\in \Gamma_s\cap \Gamma^0$ that takes $x$ to a short ray disjoint from both $r$ and $s$,  which can be further taken to $s$ by an element of $\Gamma_r\cap \Gamma^0$.
	
	Suppose $n\ge2$ and there is some $g\in \Gamma^0(r,s)$ taking $x_1$ to $x_n=s$. Then $gx$ is disjoint from $gx_1=s$ and by the base case there is some $g'\in \Gamma^0(r,s)$ taking $gx$ to $s$. Hence $g'gx=s$ as desired. This completes the proof.
\end{proof}

\begin{prop}\label{prop: normal closure large}
	The group $\Gamma^0=\Mod(\Sigma)_0$ is generated by mapping classes supported in the dividing disks.
	In other words, $\Ndiv = \Gamma^0$.
\end{prop}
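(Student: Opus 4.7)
The easy inclusion $\Ndiv \subset \Gamma^0$ has already been noted in the text (any mapping class supported in a disk becomes trivial after forgetting $K$, e.g.\ by Alexander's trick on the disk). So the content of the proposition is the reverse inclusion $\Gamma^0 \subset \Ndiv$, and the plan is simply to chain together the four preceding lemmas to reduce an arbitrary element of $\Gamma^0$ to a product of elements of subgroups we already know lie in $\Ndiv$.

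Concretely, I would fix any two disjoint short rays $r$ and $s$ and peel off one lemma at a time. By Lemma~\ref{lemma: generate Gamma^0}, $\Gamma^0$ is generated by $\Gamma_r \cap \Gamma^0$ and $\Gamma_s \cap \Gamma^0$, so it suffices to show that both lie in $\Ndiv$. By Lemma~\ref{lemma: (r) to r} applied with the roles of $r$ and $s$ in turn, each of $\Gamma_r \cap \Gamma^0$ and $\Gamma_s \cap \Gamma^0$ is generated by subgroups of $\Gamma_{(r)} \cap \Gamma^0$ and $\Gamma_{(s)} \cap \Gamma^0$ (since $\Gamma_{(s)} \cap \Gamma_r \cap \Gamma^0 \subset \Gamma_{(s)} \cap \Gamma^0$, and symmetrically), so it suffices to prove $\Gamma_{(t)} \cap \Gamma^0 \subset \Ndiv$ for $t \in \{r,s\}$. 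By Lemma~\ref{lemma: vary L}, $\Gamma_{(t)} \cap \Gamma^0$ is generated by the subgroups $\Gamma_{L,(t)}$ as $L$ ranges over filling collections of lassos with respect to $t$. Finally, Lemma~\ref{lemma: GammaL(r) contained in N} says precisely that each such $\Gamma_{L,(t)}$ is a subgroup of $\Ndiv$. Combining these four inclusions gives $\Gamma^0 \subset \Ndiv$, and together with the already-noted reverse inclusion this proves $\Ndiv = \Gamma^0$.

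Because all the real technical work --- producing dividing-disk factorizations (Lemma~\ref{lemma: GammaL(r) contained in N}), the bigon-surgery argument to fix lassos one at a time (Lemma~\ref{lemma: vary L}), localizing near an endpoint of a ray (Lemma~\ref{lemma: (r) to r}), and the transitivity argument on the ray graph (Lemmas~\ref{lemma: transitive on disjoint rays} and \ref{lemma: generate Gamma^0}) --- has been absorbed into the preceding lemmas, I do not anticipate any further obstacle for this proposition: the proof is a short chain of inclusions, and the only thing to be careful about is that Lemma~\ref{lemma: (r) to r} is applied in both directions so that the resulting generators of $\Gamma_r \cap \Gamma^0$ and $\Gamma_s \cap \Gamma^0$ all sit inside $\Gamma_{(r)} \cap \Gamma^0$ or $\Gamma_{(s)} \cap \Gamma^0$, which is where Lemma~\ref{lemma: vary L} can be invoked.
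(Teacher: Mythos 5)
Your proposal is correct and follows essentially the same route as the paper: the paper's proof likewise chains Lemma~\ref{lemma: GammaL(r) contained in N}, Lemma~\ref{lemma: vary L}, Lemma~\ref{lemma: (r) to r}, and Lemma~\ref{lemma: generate Gamma^0} to obtain $\Gamma^0 \subset \Ndiv$. Your explicit remark that $\Gamma_{(s)}\cap\Gamma_r\cap\Gamma^0 \subset \Gamma_{(s)}\cap\Gamma^0$ is exactly the point needed to apply Lemma~\ref{lemma: vary L} at the second ray, so nothing is missing.
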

\begin{proof}
	Evidently $\Ndiv \subset \Gamma^0$ since each mapping class supported in a disk has trivial
	image in $\Mod(S)$. So we just need to show $\Gamma^0 \subset \Ndiv$.
	For any short ray $r$ and any collection $L$ of lassos that are disjoint from $r$ and cut $S$ into a disk, the subgroup $\Gamma_{L,(r)}$ is a subgroup of $\Ndiv$ by Lemma \ref{lemma: GammaL(r) contained in N}. 
	Thus $\Ndiv$ contains $\Gamma_{r}\cap \Gamma^0$ by Lemmas \ref{lemma: vary L} and \ref{lemma: (r) to r} for any short ray $r$. 
	Then it follows from Lemma \ref{lemma: generate Gamma^0} that $\Ndiv$ contains $\Gamma^0$.
\end{proof}

From this we can deduce the Purity Theorem~\ref{theorem:pure} when $S$ has at most one puncture.
\begin{proof}[Proof of Theorem~\ref{theorem:pure} when $S$ has at most one puncture]
	By Lemma \ref{lemma: closed case}, it suffices to consider the case where $S$ 
	is a closed surface minus one puncture. 
	By Lemma \ref{lemma: normal act trivially} and 
	Proposition \ref{prop: normal closure large}, the normal closure of any 
	$g\in\Gamma\setminus\PMod(\Sigma)$ contains $\Ndiv=\Gamma^0$. 
	Hence any normal subgroup not contained in $\PMod(\Sigma)$ must contain $\Gamma^0$.
\end{proof}

\subsection{Proof of the Purity Theorem in general}\label{subsec: Purity general case}
In this section we prove the Purity Theorem \ref{theorem:pure} in general. Let $S$ be a surface of finite genus with finitely many punctures $P$. Let $K$ be a Cantor subset of $S$. Denote $\Sigma=S\setminus K$.

Note that any mapping class in $\Mod(\Sigma)=\Mod(S,K)$ preserves the sets $P$ and $K$ respectively.
The subgroup $\PMod(\Sigma,P)$ fixing $P$ pointwise is a proper subgroup if and only if $P$ has more than one puncture.

The proof in Section \ref{subsec: Purity one puncture} without many changes proves the following version of the Purity Theorem for $\PMod(\Sigma,P)$, from which we will deduce the Purity Theorem for $\Mod(S,K)$.
\begin{thm}\label{thm: Purity for pure subgroup}
	Let $S$ be a surface of finite genus with finitely many punctures $P$.
	Let $K$ be a Cantor subset of $S$.
	Then any normal subgroup $N$ of $\PMod(S\setminus K,P)$ either contains the kernel of the forgetful map to $\PMod(S,P)$, or it is contained in $\PMod(S,K\cup P)$.
\end{thm}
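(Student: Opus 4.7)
The plan is to mirror the structure of Section \ref{subsec: Purity one puncture} after fixing a distinguished puncture $p_0 \in P$ to play the role of $\infty$. If $P$ is empty or a singleton, the statement either reduces via Lemma \ref{lemma: closed case} to what has been proved, or is literally what has been proved, so I focus on the case $|P| \ge 2$. I will set $\Gamma \defeq \PMod(\Sigma, P)$, let $\Gamma^0$ denote the kernel of the forgetful map $\Gamma \to \PMod(S, P)$, and redefine a \emph{dividing disk} to be a closed disk in $S$ that is disjoint from $P$, has boundary disjoint from $K$, and has interior meeting $K$ in a proper nonempty subset. Let $\Ndiv \subset \Gamma$ be the subgroup generated by mapping classes supported in such disks; since every such disk is disjoint from $P$, we have $\Ndiv \subset \Gamma^0$ automatically. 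The theorem will follow from the two analogs below, combined in the same way as in the one-puncture case.

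First I will prove the analog of Lemma \ref{lemma: normal act trivially}: for any $g \in \Gamma - \PMod(S, K \cup P)$, the normal closure of $g$ in $\Gamma$ contains $\Ndiv$. Since $g$ fixes $P$ pointwise but, by hypothesis, moves some point $x \in K$, I can choose a small dividing disk $D$ around $x$ with $D, gD$ both disjoint from $P$ and from each other, and then run the commutator construction of Lemma \ref{lemma: normal act trivially} verbatim. The one step requiring care is the construction of the auxiliary element $b$ with prescribed behavior on $D \cup gD$: a valid $b \in \Gamma$ exists because any orientation-preserving boundary homeomorphism of $\Sigma - \intrm(D \sqcup gD)$ extends over the complement, and the extension can be chosen to fix $P$ pointwise via the change-of-coordinates principle applied to this complement, which has nonempty boundary and contains $P$ in its interior.

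Second I will prove the analog of Proposition \ref{prop: normal closure large}, namely $\Ndiv = \Gamma^0$. Short rays are properly embedded arcs from $p_0$ to a point of $K$, disjoint from $P \setminus \{p_0\}$; lassos are essential properly embedded copies of $\R$ from $p_0$ to $p_0$, again disjoint from $P \setminus \{p_0\}$. A collection $L$ of disjoint lassos disjoint from a short ray $r$ is \emph{filling} relative to $r$ if $S$ cut along $L$ consists of one disk containing $r$, together with finitely many other disks each containing exactly one puncture of $P \setminus \{p_0\}$. With this definition, $\Gamma_{L,(r)} \subset \Ndiv$ by the same geometric argument as in Lemma \ref{lemma: GammaL(r) contained in N}: the support of any element of $\Gamma_{L,(r)}$ is confined to the disk component containing $r$ minus a neighborhood of $r$, which is a dividing disk in our refined sense. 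Lemmas \ref{lemma: vary L}, \ref{lemma: transitive on disjoint rays}, \ref{lemma: (r) to r}, and \ref{lemma: generate Gamma^0} then carry over essentially unchanged, since the auxiliary disks, bigons, and homeomorphisms used in their proofs can all be shrunk into disk neighborhoods of $p_0$ missing the other punctures; the ray graph remains connected by the same classical argument.

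The main obstacle I anticipate is the bookkeeping around the extra punctures, concentrated in the adapted definition of a filling collection and in Lemma \ref{lemma: vary L}: the bigon-cutting step must now be done while avoiding $P \setminus \{p_0\}$, so the lassos $\ell'_2, \ldots, \ell'_n$ built to replace $\ell_2, \ldots, \ell_n$ must simultaneously be homotopic in $S - r$ to the originals and disjoint from the new bigon and from the other punctures. Once this is accomplished, the induction on intersection number proceeds as in the original proof, and the theorem follows by the same combination of the two analogs: any normal $N \not\subset \PMod(S, K \cup P)$ contains some $g$ moving $K$, hence by the first analog contains $\Ndiv$, which by the second equals $\Gamma^0$.
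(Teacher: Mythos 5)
Your overall architecture matches the paper's: reduce to the two analogs (the normal closure of any $g \in \Gamma \setminus \PMod(S,K\cup P)$ contains $\Ndiv$, and $\Ndiv = \Gamma^0$), and the first half of your argument is essentially what the paper does. The gap is in your modified definition of a filling collection, and it is not cosmetic. The paper does \emph{not} take a collection of lassos cutting $S$ into one disk plus once-punctured disks; it enlarges $L$ to consist of disjoint lassos \emph{together with arcs joining $\infty=p_0$ to each of the other punctures}, so that cutting along $L$ yields a \emph{single} disk with all of $P$ on its boundary. With your definition the key containment $\Gamma_{L,(r)} \subset \Ndiv$ breaks: an element of $\Gamma_{(r)}$ fixing each lasso of $L$ up to isotopy may be supported in one of the complementary components $D_j$ containing a puncture $\infty_j \in P\setminus\{p_0\}$, and you have not required those components to be disjoint from $K$. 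Concretely, if $c \subset D_j$ is a curve enclosing $\infty_j$ together with a nonempty proper clopen piece of $K\cap D_j$, then the Dehn twist $T_c$ is a nontrivial element of $\Gamma_{L,(r)}\cap\Gamma^0$, yet its support cannot be isotoped into any disk disjoint from $P$ (any disk containing an annular neighborhood of $c$ must contain one of the two complementary regions of that annulus, and each of these contains a puncture). So your assertion that "the support of any element of $\Gamma_{L,(r)}$ is confined to the disk component containing $r$ minus a neighborhood of $r$" is false, and the containment in $\Ndiv$ is left unproved for exactly the elements that make the multi-puncture case nontrivial.

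One could try to repair this by insisting that the puncture-containing complementary components be disjoint from $K$; then the restriction of an element of $\Gamma_{L,(r)}$ to such a component is a power of a Dehn twist about a curve bounding a once-punctured disk, hence trivial in $\Gamma$, and the analog of Lemma~\ref{lemma: GammaL(r) contained in N} is restored. But then the analog of Lemma~\ref{lemma: vary L} no longer "carries over essentially unchanged": the factorization must additionally arrange for $g$ to fix the small lassos isolating each $\infty_j$ in a $K$-free disk, and the bigon surgery for those lassos must be checked to avoid the punctures. This extra bookkeeping is precisely what the paper's arcs eliminate: with arcs from $\infty$ to the other punctures included in $L$, every element of $\Gamma_{L,(r)}$ is honestly supported in a single disk disjoint from $P$, and the surgery arguments of Lemma~\ref{lemma: vary L} apply verbatim to arcs as well as lassos. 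You should either adopt that definition or supply the missing argument for your restricted one.
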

\begin{proof}
	Let $\Sigma=S\setminus K$.
	When $P$ has at most one element, we have $\PMod(\Sigma,P)=\Mod(\Sigma)=\Mod(S,K)$, $\PMod(S,P)=\Mod(S)$, and $\PMod(S,K\cup P)=\PMod(S,K)$. Thus this is the Purity Theorem we proved in Section \ref{subsec: Purity one puncture}.
	
	The proof strategy still works when $P$ has more than one element, which we explain as follows. 
	As in Section \ref{subsec: Purity one puncture}, let $\Gamma=\PMod(\Sigma,P)$, let $\Gamma^0$ be the kernel of the forgetful map to $\PMod(S,P)$ and let $\Ndiv$ denote the subgroup of $\Gamma$ generated by elements supported in dividing disks. Then Lemma \ref{lemma: normal act trivially} using the same proof holds for any $g\in \Gamma\setminus \PMod(S,K\cup P)$. So it remains to show that $\Ndiv=\Gamma^0$.
	
	Fix a puncture $\infty \in P$.
	Define short rays and lassos as before using the chosen puncture $\infty$. The only difference in the proof is the definition of a filling collection $L$: Here for a given short ray $r$, we say a collection $L$ is \emph{filling} (with respect to $r$) if it consists of disjoint lassos and $|P|-1$ arcs in $\Sigma$ each connecting $\infty$ to a distinct element of $P\setminus\{\infty\}$ so that they are all disjoint from $r$ and cut $S$ into a single disk.
	
	With this modified definition, the analogs of Lemmas \ref{lemma: GammaL(r) contained in N}, \ref{lemma: vary L} , \ref{lemma: transitive on disjoint rays}, \ref{lemma: (r) to r}, and \ref{lemma: generate Gamma^0} can be proved in the same way, and they imply $\Ndiv=\Gamma^0$ as desired.
\end{proof}

Now we can deduce Purity Theorem \ref{theorem:pure} in full generality.
\begin{proof}[Proof of Purity Theorem \ref{theorem:pure}]
	Let $\Sigma=S\setminus K$ as above. Note that the forgetful map $\pi:\Mod(\Sigma)\to \Mod(S)$ restricts to the forgetful map from $\PMod(\Sigma,P)$ to $\PMod(S,P)$, and that the two maps have the same kernel since any mapping class in $\Mod(\Sigma)$ that acts trivially on $S$ must preserve $P$ pointwise.
	
	First consider the case where the normal subgroup $N$ of $\Mod(S,K)$ contains an element $g$ that acts nontrivially on $K$ and preserves $P$ pointwise. Then the normal subgroup $N\cap \PMod(\Sigma,P)$ of $\PMod(\Sigma,P)$ is not contained in $\PMod(S,K\cup P)$. Thus by Theorem \ref{thm: Purity for pure subgroup}, $N\cap\PMod(\Sigma,P)$ contains the kernel $\ker \pi$, and so does $N$.
	
	In general, if a normal subgroup $N$ of $\Mod(S,K)$ contains an element $g$ that acts nontrivially on $K$, then the image of $N$ in $\Aut(K)$ is a nontrivial normal subgroup since $\Mod(S,K)\to \Aut(K)$ is surjective. As $\Aut(K)$ is simple by Anderson's theorem \cite{Anderson} (which also follows from our proof of Purity Theorem for $S=\R^2$), we may choose $g$ so that its image in $\Aut(K)$ has infinite order. Then a power of $g$ fixes $P$ pointwise and it acts nontrivially on $K$. Thus $N$ must contain $\ker \pi$ by the previous paragraph. This completes the proof.
\end{proof}

\section{The Inertia Theorem}

In this section we let $S$ be {\em any} connected oriented surface (finite type or not).
Recall that for any compact totally disconnected subset $Q$ of $S$ the mapping class group of
$S$ rel. $Q$ is denoted $\Mod(S,Q)$. This is a subgroup of $\Mod(\Sigma)$ where $\Sigma = S-Q$
but in general it might be smaller, since a typical element of $\Mod(\Sigma)$ might permute ends
of $S$ with points of $Q$. Similarly, $\PMod(S,Q)$ denotes the {\em pure} subgroup of $\Mod(S,Q)$; 
i.e.\/ the subgroup of mapping classes fixing $Q$ pointwise. 
Throughout this section, we say a (normal) subgroup of $\Mod(S,Q)$ 
is pure (normal) if it is contained in $\PMod(S,Q)$.

For $Q$ finite, the groups $\Mod(S,Q)$ and $\PMod(S,Q)$ 
depend up to isomorphism only on the cardinality of $Q$, 
and by abuse of notation we fix representatives of these
isomorphism classes which we denote $\Mod(S,n)$ and $\PMod(S,n)$. Each $\Mod(S,Q)$ is
isomorphic to $\Mod(S,n)$ and each $\PMod(S,Q)$ is isomorphic to $\PMod(S,n)$ by an isomorphism
(in either case) unique up to an {\em inner automorphism} (in $\Mod(S,n)$). Thus there is a
{\em canonical} bijection between normal subgroups of any $\Mod(S,Q)$ contained in $\PMod(S,Q)$
and normal subgroups of $\Mod(S,n)$ contained in $\PMod(S,n)$.

\begin{defn}[Spectrum]
Let $K \subset S$ be a Cantor set, and let $N$ be a normal subgroup of $\Mod(S,K)$ contained in
$\PMod(S,K)$. For any natural number $n$, let $Q \subset K$ be an $n$-element set. The image
of $N$ under the forgetful map $\PMod(S,K) \to \PMod(S,Q)$ determines a normal subgroup 
$N_n$ of $\Mod(S,n)$ contained in $\PMod(S,n)$ depending only on $n$ (and not on $Q$). 
The {\em spectrum} of $N$ is the sequence $\normseq$ of pure normal subgroups $N_n \subset \Mod(S,n)$.
\end{defn}

The spectrum does not determine $N$; we shall see some examples of different groups with
the same spectrum in Example~\ref{example:different_same_spectrum}.

Each group $N_n$ determines $N_m$ for any $m < n$, since $N_m$ is just the image of $N_n$ under
any forgetful map $\PMod(S,n) \to \PMod(S,m)$ that forgets $n-m$ marked points. This already
imposes nontrivial conditions on the $N_n$ that we formalize as follows:

\begin{defn}[Algebraically Inert]
A family (indexed by $n \in \N$) of normal subgroups 
$\normseq$ of $\Mod(S,n)$, each $N_n$ contained in $\PMod(S,n)$, 
is called an {\em algebraically inert family} if for each $n>m$ the image of $N_n$ in $\PMod(S,m)$ 
under each of the $n$ choose $m$ forgetful maps $\PMod(S,n) \to \PMod(S,m)$ is equal to $N_m$.

A normal subgroup $N_n$ is called {\em algebraically inert} if it is a member of some
algebraically inert family.
\end{defn}

If $\normseq$ is the spectrum of $N$ then necessarily $\normseq$ is an algebraically inert family 
and every $N_n$ is algebraically inert. 
We do not know if every algebraically inert family arises as the spectrum of some $N$, or
even if each individual algebraically inert $N_n$ arises in the spectrum of some $N$.
Nevertheless we {\em are} able to give a complete characterization of which subgroups 
of $\PMod(S,n)$ arise as $N_n$ for some normal subgroup $N$ of $\PMod(S,K)$. 
This is the main result of the section, the {\em Inertia Theorem}~\ref{theorem:inert}
and such subgroups $N_n$ are said to be {\em inert} (see Definition~\ref{definition:inert}).

\subsection{Inert subgroups}\label{subsection:inertia}

Let $S$ be any surface and let $Q \subset S$
be a finite subset with cardinality $n$. We shall describe an operation on mapping classes
in $\PMod(S,Q)$ called {\em insertion}. Informally, this operation takes as input
a pure mapping class $\alpha$, chooses a representative homeomorphism $\tilde{\alpha}$ 
which is equal to the identity on a neighborhood $D_Q$ of $Q$, and taking the conjugacy
class of $\tilde{\alpha}$ rel. $Q'$ where $Q'$ is contained in $D_Q$ and $Q'$ is finite
with $|Q'|=|Q|$.

Now let's make this more precise. Let $D_Q \subset S$ be a collection of $n$
disjoint closed disks, each centered at some point of $Q$ and let $\pi:D_Q \to Q$ be the 
retraction that takes each disk to its center. By abuse of notation, let 
$\PMod(S,D_Q)$ denote the mapping class group of $S$ fixing $D_Q$ {\em pointwise}.
There is a surjective forgetful map $\PMod(S,D_Q) \to \PMod(S,Q)$ and, as is well-known, 
this is a $\Z^n$ central extension generated by Dehn twists around the boundary components
of $D_Q$ (see e.g.\/ Farb--Margalit \cite[Prop~3.19]{Farb_Margalit}; 
they call this the {\em capping homomorphism}). 

\begin{defn}[Insertion]
Let $\alpha \in \PMod(S,Q)$ be any element, and let $\hat{\alpha}$ be some preimage in
$\PMod(S,D_Q)$. Let $f:Q \to Q$ be any map (possibly not injective) and $n=|f(Q)|$. The {\em insertion}
$f^*\hat{\alpha} \subset \PMod(S,n)$ is the ($\Mod(S,n)$)-conjugacy class of the element defined as
follows. Let $\tilde{f}:Q \to D_Q$ be any injective map for which the composition $\pi\tilde{f}=f$.
Then $f^*\hat{\alpha}$ is represented by the image of $\hat{\alpha}$ under the forgetful map to 
$\PMod(S,f(Q))$ which may be
canonically identified (up to conjugacy in $\Mod(S,n)$) with $\PMod(S,n)$.
\end{defn}

 Note that the
class of $f^*\hat{\alpha}$ is invariant under right composition of $f$ with a permutation of $Q$,
and it depends only on the cardinalities of the preimages (under $f$) of the elements of $Q$.

\begin{defn}[Inert]\label{definition:inert}
A subgroup $N \subset \PMod(S,n)$ normal in $\Mod(S,n)$ is said to be {\em inert} if,
after fixing some identification of $\PMod(S,n)$ with $\PMod(S,Q)$, for every
$\alpha \in N$ there is a lift $\hat{\alpha}$ in $\PMod(S,D_Q)$ so that for every
$f:Q \to Q$ the insertion $f^*\hat{\alpha}$ is in $N$.
\end{defn}

Given $N$ we can define $\hat{N} \subset \PMod(S,D_Q)$ to be the collection of all
$\hat{\alpha}$ lifting some $\alpha \in N$ for which every insertion $f^*\hat{\alpha}$ is in $N$.
We call $\hat{N}$ the {\em corona} of $N$. By the definition of inert, the corona $\hat{N}$
surjects to $N$.

\begin{lemma}\label{lemma:corona_is_group}
If $N \subset \PMod(S,n)$ is inert, the corona $\hat{N}$ is a subgroup
of $\PMod(S,D_Q)$.
\end{lemma}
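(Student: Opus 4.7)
The plan is to verify the three subgroup axioms directly. The identity of $\PMod(S,D_Q)$ lifts the identity of $\PMod(S,Q)$ and all its insertions are trivial, so membership of the identity in $\hat{N}$ is immediate.

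Closure under products and inverses will rest on the following observation: for any fixed injective lift $\tilde{f}\colon Q\to D_Q$ of $f$ (i.e.\ $\pi\tilde{f}=f$), every element of $\PMod(S,D_Q)$ is represented by a homeomorphism which is the identity on all of $D_Q$, hence in particular fixes the finite set $\tilde{f}(Q)$ pointwise. This gives an honest forgetful \emph{group} homomorphism $\PMod(S,D_Q)\to\PMod(S,\tilde{f}(Q))$, and composing with the canonical-up-to-conjugacy identification $\PMod(S,\tilde{f}(Q))\cong\PMod(S,n)$ realizes the insertion operation $\hat\alpha\mapsto f^*\hat\alpha$. Given $\hat\alpha,\hat\beta\in\hat{N}$ and any $f\colon Q\to Q$, I would pick a common $\tilde{f}$ for both factors and use the homomorphism property to write
$$f^*(\hat\alpha\hat\beta)=(f^*\hat\alpha)(f^*\hat\beta),\qquad f^*(\hat\alpha^{-1})=(f^*\hat\alpha)^{-1}.$$
Since $f^*\hat\alpha,f^*\hat\beta\in N$ and $N$ is a subgroup of $\PMod(S,n)$, the right-hand sides lie in $N$; together with the fact that $\pi(\hat\alpha\hat\beta)=\alpha\beta$ and $\pi(\hat\alpha^{-1})=\alpha^{-1}$ lie in $N$, this gives $\hat\alpha\hat\beta,\hat\alpha^{-1}\in\hat{N}$.

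The one subtlety to watch, and the main potential obstacle, is that each $f^*\hat\alpha$ is only well-defined as a $\Mod(S,n)$-conjugacy class, because the identification $\PMod(S,\tilde{f}(Q))\cong\PMod(S,n)$ is canonical only up to conjugation in $\Mod(S,n)$, and one could also worry that different choices of $\tilde{f}$ for $\hat\alpha$ versus $\hat\beta$ might yield elements of different groups that cannot be multiplied. Both concerns are neutralised by two facts already in place: fixing a single $\tilde{f}$ for the pair places the two insertions in one group where multiplication makes sense, and the normality of $N$ in $\Mod(S,n)$ makes the statement ``$f^*\hat\alpha\in N$'' independent of the residual $\Mod(S,n)$-conjugation. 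Once these are noted, no further surface-topological or combinatorial input is required; the lemma reduces to the slogan that homomorphisms preserve subgroup structure.
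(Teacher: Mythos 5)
Your proof is correct and follows essentially the same route as the paper: the paper's entire argument consists of the two identities $f^*\hat{\alpha}^{-1}=(f^*\hat{\alpha})^{-1}$ and $f^*\hat{\alpha}\,f^*\hat{\beta}=f^*(\hat{\alpha}\hat{\beta})$, which are exactly what you establish (with more care about why insertion is a homomorphism and why membership in $N$ is well-defined despite the conjugacy ambiguity).
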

\begin{proof}
For any $f$ and any $\alpha,\beta$ with lifts $\hat{\alpha},\hat{\beta}$ there are
identities $f^*\hat{\alpha}^{-1} = (f^*\hat{\alpha})^{-1}$ and 
$f^*\hat{\alpha} f^*\hat{\beta} = f^* (\hat{\alpha}\hat{\beta})$. The proof follows.
\end{proof}

The following lemma allows one to construct many inert subgroups:

\begin{lemma}\label{lemma:subset_generates_inert}
Let $X \subset \PMod(S,Q)$ be a {\em subset} closed under insertion. In other words, for every
$\alpha \in X$, there is $\hat{\alpha} \in \PMod(S,D_Q)$ so that for any $f:Q \to Q$
the insertion $f^*\hat{\alpha}$ lies in $X$. Then the subgroup of $\PMod(S,Q)$ generated 
by $X$ is inert.
\end{lemma}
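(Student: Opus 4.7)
The plan is to show $N := \langle X \rangle$ is first $\Mod(S,n)$-normal (a prerequisite for the definition of inert to even apply) and then inert, by building a lift of an arbitrary element of $N$ out of lifts of generators and invoking the multiplicativity of insertion.

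For normality, I would observe that when $f = \mathrm{id}_Q$ the insertion $\mathrm{id}_Q^*\hat{\alpha}$ is precisely the $\Mod(S,n)$-conjugacy class of $\alpha$ (the image of $\hat{\alpha}$ under the capping homomorphism). So the hypothesis that every insertion of every $\alpha \in X$ lies in $X$ forces $X$ to be closed under $\Mod(S,n)$-conjugation, whence $N = \langle X \rangle$ is $\Mod(S,n)$-normal.

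For inertness, given $\alpha \in N$, I would write $\alpha = x_1^{\epsilon_1} \cdots x_k^{\epsilon_k}$ with $x_i \in X$ and $\epsilon_i = \pm 1$. By hypothesis each $x_i$ admits a lift $\hat{x}_i \in \PMod(S,D_Q)$ whose insertions all lie in $X$, and I would take $\hat{\alpha} := \hat{x}_1^{\epsilon_1} \cdots \hat{x}_k^{\epsilon_k} \in \PMod(S,D_Q)$ as the required lift of $\alpha$. For any $f : Q \to Q$, the two multiplicativity identities for insertion (already recorded in the proof of Lemma~\ref{lemma:corona_is_group}) then give $f^*\hat{\alpha} = (f^*\hat{x}_1)^{\epsilon_1} \cdots (f^*\hat{x}_k)^{\epsilon_k}$, and each factor on the right lies in $X \subset N$ by construction, so the product lies in $N$.

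The main subtlety I expect to navigate is that $f^*\hat{\alpha}$ is canonically defined only as an $\Mod(S,n)$-conjugacy class in $\PMod(S,n)$; the specific element depends on a choice of $\tilde{f} : Q \to D_Q$ lifting $f$ and on the identification $\PMod(S, \tilde{f}(Q)) \cong \PMod(S,n)$. This is ultimately harmless: once these choices are fixed, the multiplicativity identity holds on the nose as elements of $\PMod(S,n)$, and the $\Mod(S,n)$-normality of $N$ established in the first step absorbs any remaining ambiguity of representative.
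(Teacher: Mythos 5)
Your proof is correct and follows essentially the same route as the paper: lift each generator, take the product of the lifts as the lift of a general element of $\langle X\rangle$, and apply the multiplicativity identities $f^*(\hat{\alpha}\hat{\beta}) = (f^*\hat{\alpha})(f^*\hat{\beta})$ and $f^*\hat{\alpha}^{-1} = (f^*\hat{\alpha})^{-1}$. Your additional observation that taking $f=\mathrm{id}_Q$ shows $X$ is closed under $\Mod(S,n)$-conjugation (so that $\langle X\rangle$ is normal, as the definition of inert requires) is a point the paper leaves implicit, and is a welcome clarification.
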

\begin{proof}
Note that for any $f$ and any $\alpha \in \PMod(S,Q)$ there is an identity 
$f^*\hat{\alpha}^{-1} = (f^*\hat{\alpha})^{-1}$. So without loss of generality we may
assume $X$ is closed under taking inverses.

Now, let $G$ be the subgroup of $\PMod(S,Q)$ generated by $X$, and let $x \in G$ be arbitrary, so 
that $x=x_1\cdots x_n$ where every $x_i \in X$. For each $i$ let $\hat{x}_i \in \PMod(S,D_Q)$
be the lift promised by the definition of insertion. If we define
$\hat{x}:=\hat{x}_1 \cdots \hat{x}_n$ then evidently $\hat{x}$ is a lift of $x$ to $\PMod(S,D_Q)$, and
$$f^* \hat{x} = f^* \hat{x}_1 \cdots \hat{x}_n = (f^*\hat{x}_1) \cdots (f^*\hat{x}_n) \in G.$$
\end{proof}

\begin{exmp}[$n$th powers of Dehn twists]\label{example:inert_powers}
For any $n$ the subgroup of $\PMod(S,Q)$ generated by $n$th powers of Dehn twists along embedded loops 
is inert. For, the {\em set} of $n$th powers of Dehn twists along embedded loops is closed under
insertion. Now apply Lemma~\ref{lemma:subset_generates_inert}.
\end{exmp}

\begin{exmp}[Inert-Brunnian subgroup]\label{example:brunnian}
For any $n$ define the {\em Inert-Brunnian subgroup} of $\PMod(S,Q)$ to
be the group of pure mapping classes $\alpha \in \PMod(S,Q)$ that have lifts 
$\hat{\alpha} \in \PMod(S,D_Q)$ for which every nontrivial insertion (i.e.\/
every insertion other than a permutation) is the identity. Note that every such mapping class
is Brunnian in the usual sense.
\end{exmp}

\begin{exmp}[$\PSL(2,\Z)$]
Let $S=\R^2$. Then $\Mod(\R^2,3)=\PSL(2,\Z)$ and $\PMod(\R^2,3)$ is a free rank 2 subgroup of
index 6. Since $\PMod(\R^2,2)$ is trivial it follows that {\em every} normal subgroup of
$\PSL(2,\Z)$ contained in $\PMod(\R^2,3)$ is inert.
\end{exmp}

The main theorem of this section is the {\em Inertia Theorem}: 

\begin{thm}[Inertia Theorem]\label{theorem:inert}
Let $S$ be any connected, orientable surface, and let $K$ be a Cantor set in $S$. 
A subgroup $N_n$ of $\PMod(S,n)$ is equal to the image of some $\Mod(S,K)$-normal
pure subgroup $N \subset \PMod(S,K)$ under the forgetful map if and only if it is inert.
\end{thm}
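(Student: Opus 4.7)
The proof splits into a straightforward necessity direction and a more involved sufficiency direction. For the $(\Rightarrow)$ direction, given a normal $N \subset \PMod(S,K)$ whose forgetful image is $N_n$, I would start with $\alpha \in N_n$ and a lift $\beta \in N$. Viewing $\beta$ as an element of $\PMod(\Sigma)$ with $\Sigma = S - K$, I can represent it by a homeomorphism $\tilde\beta$ that is the identity on punctured-disk neighborhoods of the $n$ ends of $\Sigma$ corresponding to points of $Q$; extending by the identity across $K$ and choosing $D_Q$ inside these neighborhoods, the class of $\tilde\beta$ in $\PMod(S,D_Q)$ yields a natural lift $\hat\alpha$. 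For any $f \colon Q \to Q$ I can choose the injective $\tilde f \colon Q \to D_Q$ with $\tilde f(Q) \subset K$, which is possible since each $K \cap D_{q_i}$ is itself a Cantor set. Then $f^*\hat\alpha \in \PMod(S,\tilde f(Q))$ is represented by the same $\tilde\beta$ with marked set $\tilde f(Q)$, and hence coincides with $\pi_{\tilde f(Q)}(\beta)$; this lies in the image of $N$ under the forgetful map to $\PMod(S,\tilde f(Q))$, which is identified with $N_n$.

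For the $(\Leftarrow)$ direction, suppose $N_n$ is inert. I would define
\[
N \defeq \bigl\{ \beta \in \PMod(S,K) : \pi_R(\beta) \in N_R \text{ for every $n$-subset } R \subset K \bigr\},
\]
where $N_R \subset \PMod(S,R)$ denotes the image of $N_n$ under the canonical identification of $\PMod(S,R)$ with $\PMod(S,n)$; this image is a well-defined subgroup (not merely up to conjugacy) because $N_n$ is normal in $\Mod(S,n)$. A direct calculation using the identity $\pi_R(\phi\beta\phi^{-1}) = \phi_* \pi_{\phi^{-1}(R)}(\beta)$ and normality of $N_n$ in $\Mod(S,n)$ shows that $N$ is $\Mod(S,K)$-normal, and it is pure by construction. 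The inclusion $\pi_Q(N) \subset N_n$ is clear, so the content is to produce, for each $\alpha \in N_n$, a lift $\beta \in N$.

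The heart of the argument is constructing this lift, which is the step I expect to be the main obstacle. Using inertness, I would take the corona lift $\hat\alpha \in \PMod(S,D_Q)$ and realize it by a homeomorphism $h$ of $S$ fixing $D_Q$ pointwise, then modify $h$ outside $D_Q$ to obtain a representative that preserves $K$ pointwise. For $R \subset D_Q$ the projection $\pi_R(\beta)$ is computed as an insertion of $\hat\alpha$ and lies in $N_n$ by the corona property. The delicate point is $R \subset K$ far from $D_Q$: here the projection depends on how $\beta$ behaves far from $D_Q$, and the naive modification of $h$ to preserve $K$ setwise and pointwise need not be isotopic to the identity in $\PMod(S,Q)$, so care is needed to ensure both $\pi_Q(\beta) = \alpha$ and $\pi_R(\beta) \in N_R$ hold simultaneously. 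My plan is to build $\beta$ as a convergent product of insertions on a nested clopen decomposition of $K$ into $n$-clusters, in the spirit of the infinite product $x_f = \prod h^n f h^{-n}$ used in the proof of Lemma~\ref{lemma: normal act trivially}. Since $\Mod(S,K)$ acts transitively on $n$-tuples of distinct points in $K$, any $R$ may, after conjugation, be realized as the insertion site of some sufficiently fine level, and then inertness combined with normality of $N_n$ forces $\pi_R(\beta) \in N_R$ as required.
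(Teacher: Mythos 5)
The fatal problem is in your $(\Rightarrow)$ direction, which you dismiss as the ``straightforward'' half but which is in fact the hard half of the theorem. You assert that a lift $\beta \in N$ of $\alpha$ can be represented by a homeomorphism $\tilde\beta$ that is the identity on neighborhoods of the points of $Q$. This is false for a general element of $\PMod(S,K)$: each $q_i \in Q$ is a non-isolated point of $K$, and an infinite product of Dehn twists about a nested sequence of dividing disks shrinking to $q_i$ is a pure mapping class that is not isotopic rel $K$ to anything equal to the identity on any neighborhood of $q_i$ (its restriction to a two-point subset of $K$ inside any such neighborhood is already a nontrivial twist power). Everything downstream --- in particular the claim that $f^*\hat\alpha$ is computed by restricting $\tilde\beta$ to $\tilde f(Q)$ --- collapses with it. Overcoming exactly this obstruction is the entire technical content of the paper's proof: Lemma~\ref{lemma:nested_Dehn_twists} puts the restriction of $\gamma$ near each $q_i$ into a normal form as an infinite product of Dehn twists about nested dividing disks, and Proposition~\ref{proposition:local_behaviour} uses conjugation by elements of $\Mod(S,K)$ that permute points of $K$ --- hence uses normality of $N$ under the \emph{full} group $\Mod(S,K)$, not just under $\PMod(S,K)$ --- to replace $\gamma$ by a different element $\gamma' \in N$ with the same restriction to $Q$ whose restriction to a large finite set $X \supset Q$ genuinely comes from $\PMod(S,D_Q)$. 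The fact that your argument for this direction never invokes normality of $N$ is itself a warning sign: as written it would prove the conclusion for arbitrary subgroups, which is not true.

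Your $(\Leftarrow)$ direction is essentially sound but inverted in perceived difficulty and overbuilt. The ``maximal'' $N$ you define is fine (it is pure, normal, and its image in $\PMod(S,Q)$ is contained in $N_n$), but surjectivity onto $N_n$ does not require convergent infinite products over nested clopen decompositions. As in the paper's Lemma~\ref{lemma:inert_is_realized}, choose the $n$ disjoint disks so that their interiors cover \emph{all} of $K$, with the clopen piece of $K$ in the $i$-th disk containing $q_i$; a homeomorphism realizing the corona lift $\hat\alpha$ and equal to the identity on this disk family automatically fixes $K$ pointwise, restricts to $\alpha$ on $Q$, and restricts to an insertion $f^*\hat\alpha \in N_n$ on \emph{every} $n$-element subset $R \subset K$, since every such $R$ lies inside the disk family. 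Your worry about subsets $R$ ``far from $D_Q$'' arises only because you took the disks to be small neighborhoods of $Q$ rather than a cover of $K$.
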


\begin{rmk}
Insertion bears a family resemblance to Boyland's {\em forcing} order on braids \cite{Boyland}.
It is possible to restate the definitions of inertia and algebraic inertia in the language of operads and FI-modules; 
see e.g.\/ \cite{Church_Putman}. One wonders whether such a reformulation could lead to 
sharper insights into the structure of such groups.
\end{rmk}

\subsection{Proof of the Inertia Theorem}

The goal of this section is to prove the Inertia Theorem~\ref{theorem:inert}.
One direction is easy: given an inert group $N_n \subset \PMod(S,n)$ we construct a pure
normal $N$ with $N_n$ in the spectrum.

\begin{lemma}\label{lemma:inert_is_realized}
For every inert subgroup $H \subset \PMod(S,n)$ there is a pure normal subgroup $N$ of $\Mod(S,K)$
whose restriction to every $n$-element subset of $K$ is conjugate to $H$.
\end{lemma}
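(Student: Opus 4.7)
The plan is to build $N$ as the normal closure in $\Mod(S,K)$ of a carefully chosen family of homeomorphisms representing elements of $H$, and then verify that the forgetful image on every $n$-element subset of $K$ recovers $H$. First I would fix an $n$-element reference subset $Q_0 = \{q_1,\dots,q_n\} \subset K$ and exploit the Cantor structure of $K$ to partition $K$ into $n$ nonempty clopen pieces $K_1,\dots,K_n$ with $q_i \in K_i$, each contained in the interior of a small closed disk $D_i$ about $q_i$. Setting $D_{Q_0} = \bigsqcup_i D_i$, one then has $K \subset \intrm(D_{Q_0})$, and I identify $\PMod(S,n)$ with $\PMod(S,Q_0)$.

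For each $\alpha \in H$, inertness supplies a lift $\hat{\alpha} \in \PMod(S,D_{Q_0})$ in the corona all of whose insertions lie in $H$. Realize $\hat{\alpha}$ by a homeomorphism $\phi_\alpha$ of $S$ that is the identity on $D_{Q_0}$; since $K \subset D_{Q_0}$, this $\phi_\alpha$ fixes $K$ pointwise and so represents a class in $\PMod(S,K)$. Let $N$ be the normal closure in $\Mod(S,K)$ of $\{[\phi_\alpha] : \alpha \in H\}$. Because $\PMod(S,K)$ is normal in $\Mod(S,K)$ and contains every generator, $N \subset \PMod(S,K)$ automatically.

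The heart of the argument is computing $F_{Q'}(N)$ for an arbitrary $n$-element $Q' \subset K$, where $F_{Q'}\colon \PMod(S,K) \to \PMod(S,Q')$ is the forgetful map. A typical generator of $N$ has the form $g[\phi_\alpha]g^{-1}$ with $g \in \Mod(S,K)$, and since $\phi_\alpha$ fixes $K$ pointwise so does its conjugate. Set $Q_0'' := g^{-1}(Q')$; then $g$ furnishes an isomorphism $g_* \colon \PMod(S,Q_0'') \to \PMod(S,Q')$, which under identification with $\PMod(S,n)$ is inner by an element of $\Mod(S,n)$, and a direct unwinding of the forgetful maps gives
\[
F_{Q'}\bigl(g[\phi_\alpha]g^{-1}\bigr) \;=\; g_*\bigl(F_{Q_0''}(\phi_\alpha)\bigr).
\]
Because $\phi_\alpha$ is the identity on $D_{Q_0}$ and $Q_0'' \subset K \subset D_{Q_0}$, any bijection $\tilde f \colon Q_0 \to Q_0''$ exhibits $F_{Q_0''}(\phi_\alpha)$ as the insertion $f^*\hat{\alpha}$, where $f := \pi \circ \tilde f$ records which disk $D_j$ each point of $Q_0''$ sits in. Inertness yields $f^*\hat{\alpha} \in H$, and normality of $H$ in $\Mod(S,n)$ then gives $F_{Q'}(g[\phi_\alpha]g^{-1}) \in H$, proving $F_{Q'}(N) \subset H$.

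For the reverse containment, given $\alpha \in H$ I would pick $g \in \Mod(S,K)$ sending $Q_0$ bijectively onto $Q'$, which is possible since any bijection $Q_0 \to Q'$ extends to a homeomorphism of $S$ permuting $K$. Then $Q_0'' = Q_0$ and the associated $f$ is a permutation, so $F_{Q_0''}(\phi_\alpha) = \alpha$ up to $\Mod(S,n)$-conjugacy, hence $F_{Q'}(g[\phi_\alpha]g^{-1})$ coincides with $\alpha$ inside $\PMod(S,Q') \cong \PMod(S,n)$. Therefore $H \subset F_{Q'}(N)$, finishing the equality. The main obstacle is precisely the bookkeeping in paragraph three: tracking how $F_{Q'}$ intertwines with conjugation by non-pure elements of $\Mod(S,K)$ and recognizing the resulting forgetful image as an insertion of $\hat{\alpha}$. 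Once this translation is in place, inertness and the normality of $H$ close out both inclusions.
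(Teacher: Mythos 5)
Your construction is essentially the paper's: the paper takes $N$ to be the group generated by the images in $\PMod(S,K)$ of the corona $\hat H$ transported to every family of $n$ disjoint disks whose interiors cover $K$, which is exactly the normal closure of the image from one fixed family $D_{Q_0}$ that you use, and both verifications reduce the forgetful image on an arbitrary $n$-element set to an insertion $f^*\hat\alpha$ and invoke inertness. Your write-up just spells out the conjugation bookkeeping that the paper compresses into its final sentence; the argument is correct.
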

\begin{proof}
Let $D_n$ be a family of $n$ disjoint disks in $S$ and let $\hat{H} \subset \PMod(S,D_n)$ 
be the corona of $H$. For every family $D_Q$ of $n$ disjoint disks in $S$ whose interior
contains $K$ and each of whose components intersects $K$, we can fix a homeomorphism of
$S$ taking $D_n$ to $D_Q$ and therefore an isomorphism
$\PMod(S,D_n) \to \PMod(S,D_Q)$ taking $\hat{H}$ to $\hat{H}_Q$. 
Under the forgetful map $\PMod(S,D_Q) \to \PMod(S,K)$ the image of $\hat{H}_Q$ is realized
by homeomorphisms fixed pointwise on $D_Q$. Let $N$ be the subgroup of $\PMod(S,K)$ generated
by all the images of $\hat{H}_Q$'s. Then by the definition of inert, the restriction of $N$ to every 
$n$-element subset of $K$ is conjugate to $H$.
\end{proof}

We shall have cause to refer to the group $N$ constructed from $H$ in 
Lemma~\ref{lemma:inert_is_realized}. We call it the {\em flattening} of $H$, and denote it
$N^\flat(H)$. We can use it to give a simple example of distinct normal subgroups in
$\PMod(S,K)$ with the same spectrum.

\begin{lemma}\label{lemma:flat_property}
Let $N$ be equal to $N^\flat(H)$ for some $H$. 
Then for every $\alpha \in N^\flat(H)$
there is a neighborhood $V$ of $K$ in $S$ so that $\alpha$ is represented by a homeomorphism
fixed pointwise on $V$.
\end{lemma}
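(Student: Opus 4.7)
The plan is to unwind the definition of $N^\flat(H)$ from the construction in Lemma \ref{lemma:inert_is_realized} and show the property holds generator-by-generator, then extend to arbitrary products. Recall that $N^\flat(H)$ is generated by the images in $\PMod(S,K)$ of the coronas $\hat{H}_Q$ for various families $D_Q$ of $n$ disjoint disks whose interior contains $K$ and each of whose components meets $K$. By construction, every element of $\hat{H}_Q$ lifts to $\PMod(S,D_Q)$, hence is represented by a homeomorphism of $S$ that is fixed pointwise on all of $D_Q$.

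First I would fix $\alpha \in N^\flat(H)$ and $p \in K$, and write $\alpha = g_1^{\epsilon_1} \cdots g_k^{\epsilon_k}$ as a finite word where each $g_j$ lies in $\hat{H}_{Q_j}$ for some disk family $D_{Q_j}$ and $\epsilon_j = \pm 1$. Choose a representing homeomorphism $\tilde{g}_j$ of $g_j$ that is the identity on all of $D_{Q_j}$. Since the interior of $D_{Q_j}$ contains $K$, there is a unique component $D_{Q_j}(p) \subset D_{Q_j}$ whose interior contains $p$, and $\tilde{g}_j^{\epsilon_j}$ is the identity on $D_{Q_j}(p)$.

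Next I would produce the common disk. The finite intersection $\bigcap_{j=1}^k D_{Q_j}(p)$ contains $p$ in its interior (as a finite intersection of open neighborhoods of $p$), so it contains a small closed disk $D$ with $p$ in its interior. Then the homeomorphism $\tilde{\alpha} \defeq \tilde{g}_1^{\epsilon_1} \cdots \tilde{g}_k^{\epsilon_k}$ represents $\alpha$ and is the identity on $D$, because each factor fixes $D$ pointwise and pointwise fixing is obviously preserved under composition and inversion.

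This argument is essentially a direct consequence of the flattening construction, so I do not expect a serious obstacle; the only thing to be careful about is that the finite intersection of component disks, one per factor in the word for $\alpha$, remains an open neighborhood of $p$ in which a genuine disk can be found, and that the chosen lifts and their inverses compose to a homeomorphism representing $\alpha$ rather than only a mapping class identity up to isotopy. Both points are immediate from the setup.
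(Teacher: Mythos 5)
Your proposal is correct and is exactly the argument the paper intends: the paper's own proof is the one-line observation that the property holds for the generators (which by construction are represented by homeomorphisms fixed pointwise on a disk family whose interior contains $K$, hence on a disk around $p$) and is preserved under finite products, which is precisely what you spell out by intersecting the component disks containing $p$. No gap; you have simply made the paper's terse proof explicit.
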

\begin{proof}
This property holds by definition for the generators (by taking $V=D_Q$), and is preserved under finite products.
\end{proof}

\begin{exmp}\label{example:different_same_spectrum}
Let $N_\text{Dehn}$ be the subgroup of $\PMod(S,K)$ generated by all Dehn twists. Then
$N_\text{Dehn}:=N^\flat(\PMod(S,n))$ for every $n$ and therefore its spectrum is equal to
precisely the sequence $\{\PMod(S,n)\}$. On the other hand, the entire group
$\PMod(S,K)$ has the same spectrum as $N_\text{Dehn}$ but fails to have the property of flattened
subgroups promised by Lemma~\ref{lemma:flat_property}. 
When $S$ is closed, $N_\text{Dehn}$ is the subgroup of compactly supported mapping classes and its closure under the compact-open topology
is exactly $\PMod(S,K)$ by \cite[Theorem 4]{Patel_Vlamis}, 
which also explains why $N_\text{Dehn}$ and $\PMod(S,K)$ have the same spectrum in view of Proposition \ref{prop:closure_spectrum}.

Here is another example. For any $p$ let $N_{\text{Dehn}^p}$ be the group generated by $p$th powers
of all Dehn twists; evidently this group is a flattening of any of the inert subgroups from
Example~\ref{example:inert_powers}. On the other hand, let $N'_p$ be the group generated by
simultaneous $\pm p$th powers of Dehn twists in arbitrary (possibly infinite) 
families of disjoint curves. Then $N'_p$ and $N_{\text{Dehn}^p}$ have the same spectrum but
$N'_p$ fails to have the property promised by Lemma~\ref{lemma:flat_property}.
\end{exmp}

The harder direction in the proof of the Inertia Theorem is to show that for every 
pure normal $N$, every $N_n$ in the spectrum is inert. In other words:

\begin{lemma}\label{lemma:restriction_is_inert}
Let $N$ be a pure normal subgroup of $\Mod(S,K)$. Then every $N_n$ in the spectrum of $N$ is
inert.
\end{lemma}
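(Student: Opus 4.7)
The plan is as follows. Given $\alpha \in N_n$ realized inside $\PMod(S,Q)$ for an $n$-element $Q \subset K$, I lift $\alpha$ to some $\tilde\alpha \in N$ and pick a representative homeomorphism $\phi$ of $S$ fixing $K$ pointwise. The crucial feature of the setup is that $K$ is a Cantor set, so $K \cap D_q$ is perfect and hence infinite for each $q \in Q$; thus for any $f \colon Q \to Q$ I can choose an injection $\tilde f \colon Q \to K \cap D_Q$ realizing the multiplicities $|f^{-1}(q)|$. By normality of $N$ in $\Mod(S,K)$ and transitivity of $\Mod(S,K)$ on ordered $n$-element subsets of $K$, the image of $\tilde\alpha$ under the forgetful map $\PMod(S,K) \to \PMod(S, \tilde f(Q))$ is already known to lie in $N_n$ (under the canonical identification with $\PMod(S,n)$). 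The substance of the lemma is thus to produce a single lift $\hat\alpha \in \PMod(S, D_Q)$ of $\alpha$ whose insertion $f^\ast \hat\alpha$ can, for every $f$, be identified with this normality-image.

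Concretely, I would (1) lift $\alpha$ to $\tilde\alpha \in N$ with pure representative $\phi$; (2) modify $\phi$ near $Q$ via the Alexander trick (shrinking $D_Q$ if necessary) to a homeomorphism $\phi^\ast$ that is the identity on $D_Q$, and set $\hat\alpha := [\phi^\ast]_{D_Q}$; (3) for each $f$, choose $\tilde f$ as above so that $f^\ast \hat\alpha = [\phi^\ast]_{\tilde f(Q)}$; (4) writing $\phi^\ast = \eta \phi$ with the correction $\eta$ supported in a small thickening $D_Q^+ \supset D_Q$, decompose $[\phi^\ast]_{\tilde f(Q)} = [\eta]_{\tilde f(Q)} \cdot [\phi]_{\tilde f(Q)}$, where the factor $[\phi]_{\tilde f(Q)}$ lies in $N_n$ by the normality discussion above; (5) verify that $[\eta]_{\tilde f(Q)} \in N_n$ as well for every $f$, using a single fixed choice of $\hat\alpha$.

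Step (5) is the main obstacle. The set of lifts of $\alpha$ forms a $\mathbb{Z}^n$-coset in $\PMod(S, D_Q)$ generated by the Dehn twists $T_i$ around $\partial D_{q_i}$, and different Alexander isotopies modify $\eta$ precisely by these twists; under insertion, $f^\ast T_i$ becomes a Dehn twist around $\partial D_{q_i}$ encircling the $|f^{-1}(q_i)|$ points of $\tilde f(Q)$ inside $D_{q_i}$. My intended resolution is to arrange the Alexander isotopy so that $\eta$ also fixes $K \cap D_Q^+$ pointwise, promoting $[\eta]$ to a bona fide element of $\PMod(S,K)$, and then to absorb any discrepancy between $[\eta]$ and $N$ into the $\mathbb{Z}^n$-freedom in the lift by adjusting by appropriate powers of the $T_i$. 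Verifying that a single choice of exponents handles every $f$ simultaneously is the delicate point; if the direct construction is obstructed, my fallback is to construct $\hat\alpha$ axiomatically from the compatible system $\{[\tilde\alpha]_F : F \subset K \cap D_Q \text{ finite}\}$ of forgetful images and invoke the universal property of the central extension $\PMod(S, D_Q) \to \PMod(S, Q)$.
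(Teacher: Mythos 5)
Your reduction through step (4) is sound: $[\phi]_{\tilde f(Q)}$ does lie in $N_n$ by normality, so everything hinges on step (5), and that is where the proposal breaks down. The correction $\eta=\phi^*\phi^{-1}$ undoes whatever $\phi$ does inside $D_Q$, so $[\eta]_{\tilde f(Q)}$ is (the inverse of) the \emph{local} part of $\phi$ in the disks, viewed in $\prod_i \PMod(D_i,\tilde f(Q)\cap D_i)$. A pure representative of an arbitrary $\gamma\in N$ can braid the points of $K\cap D_i$ among themselves and twist around dividing disks nested strictly inside $D_i$; none of that is visible on $Q$ (which meets each $D_i$ in one point) and none of it is a product of powers of the boundary twists $T_i$. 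So the $\Z^n$ of lifts, whose insertions are only twists around the whole of $\partial D_i$ encircling all $|f^{-1}(q_i)|$ inserted points at once, cannot absorb the discrepancy, and there is no reason for $[\eta]_{\tilde f(Q)}$ to lie in $N_n$. The fallback fails for the same reason: the compatible system $\{[\tilde\alpha]_F\}$ over finite $F\subset K\cap D_Q$ simply does not come from an element of $\PMod(S,D_Q)$ unless $\tilde\alpha$ is already locally trivial in the disks, which is precisely what is not given.

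The missing idea is that one must change the element of $N$, not merely the lift. The paper proves (Proposition~\ref{proposition:local_behaviour}) that $\gamma$ can be replaced by a product of conjugates $\gamma'\in N$, with the same restriction to $\PMod(S,Q)$, whose image on a suitably chosen finite set $X_i\subset K\cap D_i$ of size $\ge n$ \emph{does} come from $\PMod(S,D_Q)$; insertions are then realized inside $X=\sqcup X_i$. This rests on two points your outline does not supply: first (Lemma~\ref{lemma:nested_Dehn_twists}), along a well-chosen sequence of points converging into $K\cap D_i$ the local action of $\gamma$ is an infinite product of Dehn twists about nested dividing disks with exponents $(m_0,m_1,\dots)$; second, conjugating by homeomorphisms supported in $D_i$ that reindex the nested disks (legitimate because $N$ is normal in all of $\Mod(S,K)$, not just in $\PMod(S,K)$) and multiplying the resulting elements of $N$ lets one amalgamate the exponents to $(\sum_{k<M}m_k,0,\dots,0,m_M,\dots)$, killing the interior twists on $X_i$. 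Without some such replacement argument your construction stalls at step (5).
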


Lemma~\ref{lemma:restriction_is_inert} will follow from Proposition~\ref{proposition:local_behaviour}
which might be of independent interest.

\begin{prop}\label{proposition:local_behaviour}
	For any $\Mod(S,K)$-normal subgroup $N$ of $\PMod(S,K)$, given any element $\gamma\in N$, 
	any positive integer $M$ and any $Q:=\{q_1,\cdots,q_n\}$ an $n$-element subset of $K$ for some $n$,
	there is another element $\gamma'\in N$, a neighborhood $D_Q:=\sqcup_{i=1}^n D_i$ of $Q$ consisting of disjoint dividing disks $D_i$
	and a finite subset $X=\sqcup_{i=1}^n X_i$ where each 
	$X_i\subset K\cap D_i$ has size at least $M$ with $q_i\in X_i$, such that 
	\begin{enumerate}
	\item there exists $\gamma_D \in\PMod(S,D_Q)$ (which will play the role of $\hat{\alpha}$ in the definition of inert subgroups) 
	whose image under the forgetful map $\PMod(S,D_Q)\to \PMod(S,X)$ 
	is the same as the image of $\gamma'$ under the forgetful map $\PMod(S,K)\to\PMod(S,X)$; and
	\item $\gamma'$ and $\gamma$ have the same restriction to $\PMod(S,Q)$.
	\end{enumerate}
\end{prop}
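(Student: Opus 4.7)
The plan is to produce $\gamma'$ by multiplying $\gamma$ on the right by a correcting element $\delta\in N$ lying in the kernel of the forgetful map $\PMod(S,K)\to\PMod(S,Q)$, where $\delta$ is built from commutators using the $\Mod(S,K)$-normality of $N$.

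First I would fix a representative $\tilde\gamma$ of $\gamma$ fixing $K$ pointwise. Since $\gamma$ fixes each $q_i$, continuity lets me pick pairwise disjoint dividing disks $D_i$ about $q_i$ small enough that $\tilde\gamma(D_i)$ is disjoint from $D_j$ for every $j\neq i$. Then I select $X_i\subset K\cap D_i$ with $q_i\in X_i$ and $|X_i|\geq M$, and set $D_Q=\bigsqcup D_i$ and $X=\bigsqcup X_i$. Writing $\gamma'=\gamma\delta$, condition (2) holds provided $\delta$ lies in $\ker(\PMod(S,K)\to\PMod(S,Q))$, while condition (1) reduces to requiring that the image of $\delta$ in $\PMod(S,X)$ lie in the coset of $\mathrm{im}(\PMod(S,D_Q)\to\PMod(S,X))$ that multiplies $\gamma$'s image in $\PMod(S,X)$ into that image subgroup.

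The core of the argument is to construct $\delta$ as a product of commutators $[\eta,\gamma]\in N$ for suitable $\eta\in\Mod(S,K)$. I would choose each $\eta$ to fix $Q$ pointwise with trivial image in $\PMod(S,Q)$ (for instance a homeomorphism supported in a dividing disk around some $q_i$ and fixing $q_i$, which by the Alexander lemma applied to the disk with one marked point is trivial modulo $Q$). Each such $[\eta,\gamma]$ lies in $N$ by $\Mod(S,K)$-normality, fixes $K$ pointwise (since $\gamma$ does), and has trivial image in $\PMod(S,Q)$ because $\eta$ does. To engineer the correct image in $\PMod(S,X)$, I would exploit the self-similarity of the Cantor set $K$ as in Lemma \ref{lemma: normal act trivially}: building shifting homeomorphisms $h_i$ supported in $D_i$ that permute $K\cap D_i$ while fixing $q_i$, then combining them into telescoping infinite products, produces a rich family of $\eta$'s whose commutators with $\gamma$ realize elements of $\ker(\PMod(S,X)\to\PMod(S,Q))$ modulo $\mathrm{im}(\PMod(S,D_Q)\to\PMod(S,X))$.

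The hardest step will be verifying that these commutators suffice to hit the required coset in $\PMod(S,X)$. The key algebraic trick mimics the $a_f b a_f b^{-1}=f$ identity from Lemma \ref{lemma: normal act trivially}: a telescoping infinite product of conjugates cancels $\gamma$'s action everywhere except on a prescribed small disk inside $D_i$, leaving behind a controlled local action on $X_i$. Because condition (1) only tracks the image in $\PMod(S,X)$ and allows \emph{any} lift to $\PMod(S,D_Q)$, Dehn-twist-like ambiguities around each $\partial D_i$ are absorbed by the lifting, affording the needed flexibility. The careful bookkeeping of braids and lifts through the Birman-type sequences relating $\PMod(S,X)$, $\PMod(S,Q)$, and $\PMod(S,D_Q)$ will be the technical heart of the argument.
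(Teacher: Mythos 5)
Your overall frame agrees with the paper's: the correcting element is indeed a product of conjugates of $\gamma$ by homeomorphisms supported in the disks $D_i$ (hence lying in $N$ by normality and acting trivially on $Q$), and the flexibility of lifting to $\PMod(S,D_Q)$ absorbs boundary twists exactly as you say. But the proposal stops at the point where the real work begins, and the mechanism you sketch for that step would not go through as stated. The telescoping identity $a_f b a_f b^{-1}=f$ of Lemma~\ref{lemma: normal act trivially} relies on an element $g$ that moves a dividing disk completely off itself ($D\cap g(D)=\emptyset$); here $\gamma$ is \emph{pure}, fixes $K$ pointwise and preserves each $D_i$, so there is no such disjointness to exploit, and commutators $[\eta,\gamma]$ with $\eta$ supported in $D_i$ have images in $\PMod(S,X)$ that you cannot compute without first controlling $\gamma$'s local action. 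In particular, for an arbitrary choice of $X_i\subset K\cap D_i$ the restriction of $\gamma$ to $X_i$ is an essentially arbitrary pure braid in the disk, and there is no reason the conjugates you propose generate enough of $\ker(\PMod(S,X)\to\PMod(S,Q))$ to reach the required coset.

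The missing idea is a normal form for the local action: the paper first proves (Lemma~\ref{lemma:nested_Dehn_twists}) that one can choose an infinite discrete sequence $Z(i)=(z_0(i),z_1(i),\dots)$ in $K\cap D_i$ converging to a point, together with nested dividing disks $E_0(i)\supset E_1(i)\supset\cdots$, so that $\gamma$ restricted to $Z(i)$ is an infinite product $\prod_j t_j^{m_j}$ of Dehn twists around the $\partial E_j(i)$. The sets $X_i$ are then taken to be \emph{initial segments of these specially chosen sequences}, not arbitrary $M$-element subsets. With this normal form in hand, conjugating $\gamma$ by a shift that deletes one point of $Z(i)$ amalgamates two consecutive exponents, and the product of two such conjugates (one inverted) is an element of $N$, trivial on $Q$, whose effect on the exponent sequence is $(m_0,m_1,m_2,\dots)\mapsto(m_0+m_1,0,m_2,\dots)$; iterating zeroes out $m_1,\dots,m_{M-1}$ and leaves only a power of the boundary twist $t_0$, which is visibly in the image of $\PMod(S,D_Q)$. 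Without this structure result and the adapted choice of $X_i$, the ``careful bookkeeping'' you defer to is precisely the content of the proposition, so as written the proposal has a genuine gap at its central step.
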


We prove Lemma~\ref{lemma:restriction_is_inert} assuming Proposition~\ref{proposition:local_behaviour}.
\begin{proof}[Proof of Lemma~\ref{lemma:restriction_is_inert}]
For any $\alpha \in N_n$, we choose an $n$-element set $Q:=\{q_1,\cdots,q_n\} \subset K$
and by abuse of notation we identify $\alpha$ with a conjugacy class $\alpha_Q$ in $\PMod(S,Q)$.
Let $\gamma \in N$ restrict to $\alpha_Q$ in $\PMod(S,Q)$.
Choose $M\ge n$. 
Let the following objects be as promised in Proposition~\ref{proposition:local_behaviour}: $\gamma'\in N$, $\gamma_D \in \PMod(S,D_Q)$, a neighborhood $D_Q=\sqcup_{i=1}^n D_i$ consisting of disjoint dividing disks and subsets $X_i\subset D_i\cap K$ with each $|X_i|\ge n$. If we think of $\PMod(S,D_Q)$
as a central extension of $\PMod(S,Q)$, then $\gamma_D$ is a lift of $\alpha_Q$. Furthermore
for every map $f:Q \to Q$ we can realize $f$ by an injective map $\tilde{f}:Q \to X$ projecting to
$f$ under the obvious projection $X \to Q$ that maps $X_i$ to $q_i$, and
then the restriction of $\gamma'$ to $\tilde{f}(Q)$ is equal to the restriction of $\gamma_D$ to
$\tilde{f}(Q)$ which is (by definition) equal to the insertion $f^*(\gamma_D)$.

Since $\gamma' \in N$ it follows that the conjugacy class of the insertion $f^*(\gamma_D)$ is in $N_n$
for every $f$. Thus $N_n$ is inert, as desired.
\end{proof}

It remains to prove Proposition~\ref{proposition:local_behaviour}, for which we need the following lemma.


\begin{lemma}\label{lemma:nested_Dehn_twists}
	Let $D$ be a closed disk and $K$ be a Cantor set in the interior of $D$. Fix a pure mapping class $\gamma\in \PMod(D,K)$ and $x\in K$.
	Then there is an infinite sequence of distinct points $X=(x_0,x_1,\cdots)$ in $K$ with $x_0=x$ and a sequence of nesting dividing disks $D_1\supset D_2\supset\cdots$ converging to some $x_\infty\in K$ such that
	\begin{enumerate}
		\item $x_j\in (D_j\setminus D_{j+1})\cap K$ for all $j\ge0$, where $D_0=D$, and
		\item the image of $\gamma$ in $\PMod(D,X)$ is represented by a homeomorphism $h=\prod_{j\ge0}  t_j^{m_j}$,
		where $m_j\in\Z$ and $t_j$ is a (counter-clockwise) Dehn twist around $\partial D_j$ supported in a small tubular neighborhood $T_j$ of $\partial D_j$ in $D_j$ away from $x_j$ and $D_{j+1}$.
	\end{enumerate}
\end{lemma}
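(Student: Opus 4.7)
The plan is to proceed by an inductive ``peeling'' construction, choosing at each stage $j$ the data $x_j$, $D_{j+1}$, $m_j$ so as to maintain the invariant that the mapping class $\gamma\cdot (t_0^{m_0}\cdots t_{j-1}^{m_{j-1}})^{-1}$ admits a representative homeomorphism $\rho_j$ that is the identity outside $D_j$, and thus in particular fixes $\partial D_j$ and $K\cap D_j$ pointwise. Before starting the induction, I pick a target point $x_\infty\in K\setminus\{x\}$ and arrange the $D_j$'s to shrink to $\{x_\infty\}$; the base case is $\rho_0=\gamma$ inside $D_0=D$.

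To advance from stage $j$ to stage $j+1$, think of $\rho_j$ as a pure mapping class of $D_j$ fixing $\partial D_j$ and $K\cap D_j$ pointwise. The forgetful map to the mapping class group of $D_j$ fixing $\partial D_j$ and $\{x_j\}$ pointwise (which is $\Z\langle t_j\rangle$) reads off the exponent $m_j$. The corrected element $\sigma_j:=\rho_j\,t_j^{-m_j}$ lies in the kernel of this projection, and the goal is to find a dividing sub-disk $D_{j+1}\subset D_j$ with $x_j\in D_j\setminus D_{j+1}$, sitting inside a prescribed shrinking neighborhood basis of $x_\infty$, such that $\sigma_j$ is represented by a homeomorphism $\rho_{j+1}$ supported in $D_{j+1}$; any $x_{j+1}\in (D_{j+1}\setminus\{x_\infty\})\cap K$ then completes the stage.

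The main obstacle is precisely this step of realizing $\sigma_j$ as supported in a dividing sub-disk avoiding $x_j$. Because $K$ accumulates at $x_j$, one cannot just isolate $x_j$ by a small disk disjoint from $K\setminus\{x_j\}$. The resolution uses triviality of $\sigma_j$ modulo the rest of $K\cap D_j$: the Alexander trick (not requiring fixing the remainder of $K\cap D_j$) yields a representative that is the identity on a collar of $\partial D_j$ and on a small disk around $x_j$, leaving support in an annulus inside $D_j$. A further isotopy rel $K\cap D_j$ then ``caps off'' this annular support into a single dividing sub-disk $D_{j+1}$ enclosing the portion of $K\cap D_j\setminus\{x_j\}$ on which $\sigma_j$ acts non-trivially; the fact that $\sigma_j$ already fixes $K\cap D_j$ pointwise is what makes this capping-off possible.

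Finally, the tubular neighborhoods $T_j$ supporting $t_j$ are disjoint, lie in $D_j\setminus(\{x_j\}\cup D_{j+1})$, and accumulate only at $x_\infty$, so the infinite product $h=\prod_{j\ge 0} t_j^{m_j}$ is a well-defined homeomorphism of $D$ fixing $X$ pointwise. Agreement of $\gamma$ and $h$ in $\PMod(D,X)$ reduces to checking agreement on each finite sub-collection $\{x_0,\ldots,x_N\}\subset X$: by the invariant, $\gamma\cdot(t_0^{m_0}\cdots t_N^{m_N})^{-1}$ has a representative supported in $D_{N+1}$, which is disjoint from $\{x_0,\ldots,x_N\}$, so this element is trivial in $\PMod(D,\{x_0,\ldots,x_N\})$ by the Alexander trick applied inside $D_{N+1}$.
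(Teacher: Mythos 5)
Your overall architecture (inductively peeling off one boundary twist at a time along a nested chain of dividing disks) matches the paper's, but the inductive step as you state it is false, and the failure is located exactly at the point you flag as ``the main obstacle.'' A first, smaller problem: the group you use to read off $m_j$ --- homeomorphisms of $D_j$ fixing $\partial D_j$ pointwise and fixing the single point $x_j$, up to isotopy rel both --- is trivial, not $\Z\langle t_j\rangle$: the Alexander trick coned from $x_j$ kills the boundary twist. To define $m_j$ you need a second marked point inside $D_j$, and the integer you read off genuinely depends on which second point you take (a twist about a curve enclosing $x_j$ contributes $1$ or $0$ according to whether the reference point lies inside or outside that curve). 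This is why the paper chooses the next point $x_{j+1}$ \emph{first} and reads $m_j$ off from $\PMod(D_j,\{x_j,x_{j+1}\})\cong\Z$ before choosing $D_{j+1}$.

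Second, and fatally: your invariant retains all of $K\cap D_j$ as marked points, and the claim that $\sigma_j=\rho_j t_j^{-m_j}$ is represented, rel $K\cap D_j$, by a homeomorphism supported in a dividing sub-disk avoiding $x_j$ is false. Take $\gamma=t_{c_1}t_{c_2}^{-1}$ for nested curves $c_1\supset c_2$ both encircling $x_0$, with points of $K$ in the annulus between them and also outside $c_1$: this is trivial after forgetting $K$ down to $\{x_0\}$, yet it fixes no arc from $\partial D$ to $x_0$ (the arc must cross the marked annulus between $c_1$ and $c_2$ and gets wrapped), whereas an element supported in a closed disk missing $x_0$ would fix some such arc; so no choice of $m_0$ and $D_1$ advances your invariant past stage $0$. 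Your proposed resolution is circular: the Alexander-trick isotopy that shrinks the support of $\sigma_j$ does not respect $K\cap D_j\setminus\{x_j\}$, so it changes the mapping class rel $K\cap D_j$; and an annulus whose core separates $x_j$ from $\partial D_j$ cannot be ``capped off'' into a disk avoiding $x_j$, since any disk containing that core contains the inner complementary component and hence $x_j$. The missing idea --- which is the actual content of the lemma --- is that one must discard almost all of $K$ at each stage: after choosing $x_{j+1}$ and a disk $D_{j+1}\ni x_{j+1}$, a continuity/compactness argument produces a small Cantor subset $K_{j+1}\subset K\cap D_{j+1}$ near $x_{j+1}$ rel which $\gamma$ preserves $D_{j+1}$ up to isotopy, and only rel this thinned-out marked set can the remainder be pushed inside $D_{j+1}$. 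The conclusion concerns only the image of $\gamma$ in $\PMod(D,X)$ for a sparse sequence $X$ precisely because the stronger statement rel all of $K$, which your induction implicitly asserts, is false.
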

\begin{proof}
	Let $x_1\neq x_0\in K$ be an arbitrary point. Then $\PMod(D,\{x_0,x_1\})\cong \Z$ is generated by a (counter-clockwise)  Dehn twist, 
	which we represent by a homeomorphism $t_0$ supported in a tubular neighborhood $T_0$ of $\partial D_0$ in $D_0=D$ so that $K\cap T_0=\emptyset$.
	This determines an integer $m_0$ such that $t_0^{m_0}$ represents the image of $\gamma$ in $\PMod(D,\{x_0,x_1\})$.
	
	Represent $\gamma$ by a homeomorphism $\phi$ in $D$. Choose any dividing disk $D_1$ with $x_1\in D_1$ and $x_0\notin D_1$, clearly $\phi(D_1)$ is isotopic to $D_1$ in $(D,\{x_1\}\sqcup (K\setminus D_1))$ as they both shrink to $x_1$.
	By continuity, for any Cantor set $K_1\subset K\cap D_1$ sufficiently close to $x_1$ so that $\partial D_1$ stays disjoint from it under the isotopy, we have $\phi(D_1)$ isotopic to $D_1$ in $(D,K_1 \sqcup (K\setminus D_1))$.
	In particular, the image $\gamma_1$ of $\gamma$ in $\PMod(D,K_1\sqcup \{x_0\})$ preserves the disk $D_1$ (up to isotopy).
	Thus we can represent $\gamma_1$ as a homeomorphism on $D$ which is the Dehn twist $t_0^{m_0}$ together with a homeomorphism $\phi_1$ supported in $D_1$ fixing its boundary and $K_1$ pointwise.
	
	Now choose an arbitrary $x_2\neq x_1$ in $K_1$ and represent the (counter-clockwise) Dehn twist generating $\PMod(D_1,\{x_1,x_2\})\cong \Z$ by a homeomorphism $t_1$ 
	supported in a tubular neighborhood $T_1$ of $\partial D_1$ in $D_1$ so that $K_1\cap T_1=\emptyset$. Then $\phi_1$ as a mapping class in $\PMod(D_1,\{x_1,x_2\})$ agrees with $t_1^{m_1}$ for some $m_1\in \Z$.
	
	Repeat the process above inductively. 
	Since $D_j$ and $x_j$ are quite arbitrarily chosen (as long as $x_j$ is sufficiently close to $x_{j-1}$ so that it lies in $K_j$), we can make sure that the sequence $\{x_j\}$ converges to some $x_\infty \in K$ and $\cap_{j=0}^\infty D_j=\{x_\infty\}$.
	Then for $X=(x_0,x_1,\cdots)$, the image of $\gamma$ in $\PMod(D,X)$ has the desired property.
\end{proof}

\begin{proof}[Proof of Proposition~\ref{proposition:local_behaviour}]
	Let $D_Q:=\sqcup_{i=1}^n D_i$ be a neighborhood of $Q$ consisting of disjoint dividing disks $D_i$ with $q_i\in D_i$. 
	As in the proof of Lemma~\ref{lemma:nested_Dehn_twists}, by continuity, there are Cantor sets $K'_i\subset K\cap D_i$ for all $i$ such that
	the image $\bar{\gamma}$ of $\gamma$ in $\PMod(S,K')$ preserves each $D_i$ up to isotopy, where $K'\defeq \sqcup K'_i$.
	Denote the image of $N$ in $\PMod(S,K')$ as $\overline{N}$, which is normal in $\Mod(S,K')$. 
	
	We show below that there is an element $\bar{\gamma}'\in\overline{N}$ and finite sets $X_i\subset K'_i$ with $q_i\in X_i$ (and $|X_i|\ge M$) such that
	the image of $\bar{\gamma}'$ under the forgetful map $\PMod(S,K')\to\PMod(S,\sqcup X_i)$ has the desired property.
	The original assertion follows from this by taking an arbitrary $\gamma'\in N$ which maps to $\bar{\gamma}'$ under the forgetful map $\PMod(S,K)\to\PMod(S,K')$.
	
	Now it remains to construct some element $\bar{\gamma}'\in\overline{N}$ and finite sets $X_i$'s such that
	\begin{enumerate}
		\item $\bar{\gamma}'|_Q=\bar{\gamma}|_Q(=\gamma|_Q)\in \PMod(S,Q)$,\label{item: restrict to x}
		\item $\bar{\gamma}'$ preserves each disk $D_i$ up to isotopy in $(S,\sqcup X_i)$, and\label{item: preserve disks}
		\item $\bar{\gamma}'$ restricts to the identity in each $\Mod(\intrm(D_i),X_i)$.\label{item: restrict to Dehn twists}
	\end{enumerate}
	We will construct $\bar{\gamma}'$ as a product of conjugates of $\bar{\gamma}$ by mapping classes in $\Mod(S,K')$, 
	where each mapping class is represented by a product of homeomorphisms $g_i$, $i=1,\cdots, n$ such that $g_i$ is supported in $D_i$ and preserves $K'_i$.
	In particular, bullet (\ref{item: preserve disks}) will follow automatically from the nature of this construction. 
	
	By our choice of $K'$, there is a homeomorphism $\phi$ on $(S,K')$ representing $\bar{\gamma}$ such that $\phi$ genuinely preserves each $D_i$ and fixes its boundary pointwise.
	Thus the restriction $\phi|_{D_i}$ represents a mapping class $\gamma_i$ in $\PMod(D_i,K'_i)$.
	Applying Lemma~\ref{lemma:nested_Dehn_twists} to $\gamma_i\in \PMod(D_i,K'_i)$ gives rise to an infinite sequence 
	of points $Z(i)=(z_0(i),z_1(i),\cdots)$ in $K_i$ and a nesting sequence of dividing disks $D_i=E_0(i)\supset E_1(i)\supset\cdots$,
	such that $\phi|_{D_i}$ as a mapping class in $\PMod(D_i,Z(i))$ is represented by an infinite product $\prod_{j\ge0} t_j(i)^{m_j(i)}$, where $t_j(i)$ is a (clockwise) Dehn twist around $\partial E_j(i)$.
	
	Let $X_i=\{z_0(i),\cdots, z_M(i)\}$. Then the image of $\phi|_{D_i}$ in $\PMod(D_i,X_i)$ is $\prod_{j=0}^{M-1} t_j(i)^{m_j(i)}$.
	This mapping class is encoded by the sequence of integers $(m_0(i),m_1(i),\cdots,m_M(i))$ that
	records the power of the Dehn twists around the boundaries of the nested dividing disks. If we could somehow arrange
	for $m_1(i)=m_2(i)=\cdots = m_{M-1}(i)=0$ then the image of $\phi|_{D_i}$ in $\Mod(\intrm(D_i),X_i)$ would be
	the identity, as desired in bullet (\ref{item: restrict to Dehn twists}).
	Our goal is to modify the sequence $(m_0(i), m_1(i),\cdots,)$ to arrive at this case by passing to subsequences of $Z(i)$ and taking (products of) conjugates of $\phi|_{D_i}$ by mapping classes $g_i$ in $\Mod(D_i,K'_i)$.
	Since the modifications can be done simultaneously for different $i$'s without interfering with each other, we fix some $i$ below and omit (in our notation) 
	the dependence of $Z(i)$, $z_j(i)$, $m_j(i)$, $t_j(i)$ and $E_j(i)$ on $i$ for simplicity.
	
	The most important operation is to replace the sequence of points $Z$ by an infinite subsequence $Z'\subset Z$. This has a
	predictable effect on the associated integer sequence that we now describe.
	If we take an infinite subsequence $Z'=(z_{j_0},z_{j_1},z_{j_2}\cdots)$ of $Z$, 
	then the image of $\phi|_{D_i}$ in $\PMod(D_i,Z')$ is also isotopic to an infinite product of Dehn twists $\prod_{k=0}^{\infty} t_{j_k}^{m'_k}$,
	where $m'_k=\sum_{j_{k-1}< j\le j_k} m_j$ and $j_{-1}\defeq -1$. We refer to such an operation on associated integer
	sequences $(m_0,m_1,m_2,\cdots)\mapsto (m'_0,m'_1,m'_2,\cdots)$ as an {\em amalgamation}.
	In this situation, there is a homeomorphism $g_i$ supported on $D_i$ preserving the Cantor set $K'_i$ such that $g_i(z_{j_k})=z_k$ and $g_i(E_{j_k})=E_k$ for all $k\ge1$, $g_i(z_0)=z_0$,
	and $g_i(E_{j_0})$ is isotopic to $E_0=D_i$ in $(D_i,Z)$ (i.e. $g_i(E_{j_0})$ contains all points in $Z$). 
	Then the conjugate $g_i \phi g_i^{-1}$ represents a mapping class $\phi(Z')$ in $\PMod(D_i,Z)$ and is isotopic to $\prod_{k=0}^{\infty} t_k^{m'_k}$.
	
	Consider the specific subsequences $Z'_0=(z_1,z_2,z_3,\cdots)$ and $Z'_1=(z_0,z_2,z_3,z_4,\cdots,)$ obtained by omitting $z_0$ and $z_1$ respectively.
	Then the procedure above gives two corresponding conjugates $g_{i,0}\phi g_{i,1}^{-1}$ and $g_{i,1}\phi g_{i,1}^{-1}$ of $\phi$ (preserving the family $D_Q$),
	whose images in $\PMod(D_i,Z)$ are mapping classes $\phi(Z'_0)$ and $\phi(Z'_1)$ whose associated sequences (that appear as the exponents of $t_k$'s) are
	$(m_0+m_1,m_2,m_3,\cdots)$ and $(m_0,m_1+m_2,m_3,\cdots)$ respectively.
	Therefore, the homeomorphism $(g_{i,0}\phi g_{i,0}^{-1}) \cdot (g_{i,1}\phi g_{i,1}^{-1})^{-1}$ has the following properties:
	\begin{enumerate}
		\item it is the identity in $\PMod(S,Q)$ since the restrictions of both $g_{i,0}\phi g_{i,0}^{-1}$ and $g_{i,1}\phi g_{i,1}^{-1}$ to $Q$ are equal to $\bar{\gamma}|_Q$,
		\item it preserves all disks $D_j$ in $D_Q$,
		\item it is isotopic to $t_0^{m_1}\cdot t_1^{-m_1}$ as a mapping class in $\Mod(D_i,Z(i))$, and is the identity in all $D_j$ for $j\neq i$, and
		\item it represents an element in $\overline{N}$ as a mapping class in $\PMod(S,K')\le \Mod(S,K')$ since $\overline{N}$ is $\Mod(S,K')$-normal.
	\end{enumerate}
	
	Multiplying this homeomorphism with $\phi$, we obtain a mapping class in $\overline{N}$ with the following properties:
	\begin{enumerate}
		\item its image in $\PMod(S,Q)$ is $\bar{\gamma}|_Q$,
		\item it preserves all disks in $D_Q$, and
		\item it is isotopic to the infinite product $t_0^{m_0+m_1} \cdot \prod_{k\ge2} t_k^{m_k}$ in $(D_i,Z(i))$ and is the same as $\phi$ in any $(D_j,Z(j))$ for $j\neq i$.
	\end{enumerate}
	Thus the net effect of this entire procedure is to change $m_1$ to $0$ and $m_0$ to $m_0 + m_1$; i.e.\/ its effect is
	represented on sequences as $(m_0,m_1,m_2,\cdots)\mapsto (m_0+m_1,0,m_2,\cdots)$.
	
	Replacing the role of the indices $0$ and $1$ above by some $j$ and $j+1$, we can similarly 
	change $m_{j+1}$ to $0$ and $m_j$ to $m_j+m_{j+1}$. By applying such operations inductively, 
	we can modify the sequence until it is of the form 
	$$(\sum_{k=0}^{M-1} m_k, 0,0,\cdots, 0, m_M, m_{M+1},\cdots)$$ i.e.\/ all integers are zeros 
	at the $j$-th location for all $1\le j\le M-1$.
	As discussed above, the mapping class associated to this sequence 
	restricts to the identity in each $\Mod(\intrm(D_i),X_i)$ where $X_i=\{z_0(i),\cdots, z_M(i)\}$.
	
	Performing this modification simultaneously for all $D_i$'s we obtain an element 
	$\bar{\gamma}'$ in $\overline{N}$ with the desired properties.	
\end{proof}

This completes the proof of Proposition~\ref{proposition:local_behaviour} and therefore also 
Lemma~\ref{lemma:restriction_is_inert}. Together with Lemma~\ref{lemma:inert_is_realized} this
completes the proof of the Inertia Theorem~\ref{theorem:inert}.

\begin{corr}
Every inert subgroup of $\PMod(S,n)$ is algebraically inert.
\end{corr}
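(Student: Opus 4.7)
The plan is to deduce the corollary almost immediately from the Inertia Theorem together with one essentially tautological observation, namely that the spectrum of any pure normal subgroup of $\Mod(S,K)$ is an algebraically inert family.

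First I would unpack what needs to be shown: given an inert $H \subset \PMod(S,n)$, I must exhibit a family $\{N_m\}_{m \in \N}$ with $N_n = H$, each $N_m$ normal in $\Mod(S,m)$ and contained in $\PMod(S,m)$, and satisfying the compatibility condition that every forgetful map $\PMod(S,n') \to \PMod(S,m')$ (for $n'>m'$) sends $N_{n'}$ onto $N_{m'}$.

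Second, I would produce the candidate family as the spectrum of the flattening $N^\flat(H)$. By Lemma~\ref{lemma:inert_is_realized}, the pure normal subgroup $N^\flat(H) \subset \PMod(S,K)$ exists and restricts on every $n$-element subset of $K$ to a conjugate of $H$; invoking the canonical bijection between normal subgroups of $\Mod(S,Q)$ and of $\Mod(S,n)$ described at the start of Section~\ref{subsection:inertia}, the $n$-th term of the spectrum of $N^\flat(H)$ equals $H$ exactly.

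Third, I would check algebraic inertness of the spectrum. This is not an obstacle but a pure definition-chase: if $N$ is any pure normal subgroup of $\Mod(S,K)$ and $R \subset Q \subset K$ with $|R|=m$ and $|Q|=n$, then the forgetful map $\PMod(S,Q) \to \PMod(S,R)$ factors the forgetful map $\PMod(S,K)\to\PMod(S,R)$ through $\PMod(S,K)\to\PMod(S,Q)$. Taking images, the image of $N_n$ under $\PMod(S,Q)\to\PMod(S,R)$ is precisely $N_m$. Since every forgetful map $\PMod(S,n)\to\PMod(S,m)$ arises (up to the canonical identification) from some such inclusion $R \subset Q \subset K$, the family $\{N_m\}$ meets the algebraically inert condition. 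There is no substantive obstacle here: the Inertia Theorem does all the real work, and the corollary is essentially a repackaging of the easier direction of that theorem (Lemma~\ref{lemma:inert_is_realized}) combined with the trivial observation about spectra.
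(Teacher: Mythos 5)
Your proposal is correct and follows essentially the same route as the paper: realize the inert subgroup $H$ as the $n$-th term of the spectrum of a pure normal subgroup (via Lemma~\ref{lemma:inert_is_realized}/the flattening), then observe that any spectrum is automatically an algebraically inert family because forgetful maps compose. The paper's own proof is just a terser version of exactly this argument.
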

\begin{proof}
Every inert subgroup is equal to $N_n$ for some normal $N$. But then for any $m>n$, if
$N_m \subset \PMod(S,m)$ is in the spectrum of $N$, then the image of $N_m$ in $\PMod(S,n)$ is
equal to $N_n$ for every forgetful homomorphism from $\PMod(S,m) \to \PMod(S,n)$; thus
$N_n$ is algebraically inert.
\end{proof}

It seems unlikely that the converse is true --- i.e.\/ that every algebraically inert subgroup is inert ---
but we do not know a counterexample.

\subsection{Operations on normal subgroups}

We give two examples of operations on normal subgroups that preserve the spectrum.

\subsubsection{Inflation}

The self-similarity of a Cantor set gives rise to a natural operation on pure normal
subgroups of $\Mod(S,K)$ that we call {\em inflation}.

\begin{defn}
Let $K \subset S$ be a Cantor set and let $K' \subset K$ be a proper Cantor subset.
For $N$ a pure normal subgroup of $\Mod(S,K)$, the {\em inflation} of $N$, denoted
$N^+$, is the pure normal subgroup of $\Mod(S,K)$ obtained by choosing a homeomorphism
of pairs $h:(S,K') \to (S,K)$ and an associated isomorphism $h_*:\Mod(S,K') \to \Mod(S,K)$
and then defining $N^+$ to be the image of $N$ under the composition of
the restriction $\PMod(S,K) \to \PMod(S,K')$ with $h_*$.
\end{defn}

Because $N$ is normal in $\Mod(S,K)$ it follows that the definition of $N^+$ does not depend 
on the choice of $K' \subset K$ or the choice of homeomorphism $h$.

One nice property of inflation is that it preserves spectrum:

\begin{prop}\label{prop:inflation_spectrum}
For any pure normal $N$ the groups $N$ and $N^+$ have the same spectrum.
\end{prop}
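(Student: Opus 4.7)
The strategy is a direct diagram chase: for each $n$, I compute $(N^+)_n$ from the definition of inflation and show it equals $N_n$ as a normal subgroup of $\Mod(S,n)$.

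Fix $n \geq 1$ and an $n$-element subset $Q \subset K$. First, I observe that the homeomorphism of pairs $h \colon (S,K') \to (S,K)$ is a self-homeomorphism of $S$ satisfying $h(K') = K$; since $h$ is a bijection, $h(S \setminus K') = S \setminus K$, and therefore $h^{-1}(K) = K'$. In particular the $n$-element set $Q' := h^{-1}(Q)$ lies inside $K'$, hence inside $K$.

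Next, I set up the commutative square
$$
\begin{CD}
\PMod(S,K') @>h_*>> \PMod(S,K) \\
@V\text{res}_{Q'}VV @VV\text{res}_Q V \\
\PMod(S,Q') @>>h_*> \PMod(S,Q)
\end{CD}
$$
whose commutativity is immediate: the vertical maps are the forgetful maps induced by the inclusions $Q' \subset K'$ and $Q \subset K$, while the horizontal maps are induced by conjugation by $h$, which carries $K'$ to $K$ and $Q'$ to $Q$. Using this square together with the definition $N^+ = h_*(\text{res}_{K'}(N))$, I compute
$$
\text{res}_Q(N^+) \;=\; \text{res}_Q\bigl(h_*(\text{res}_{K'}(N))\bigr) \;=\; h_*\bigl(\text{res}_{Q'}(\text{res}_{K'}(N))\bigr) \;=\; h_*\bigl(\text{res}_{Q'}(N)\bigr),
$$
where in the last step I use that $Q' \subset K' \subset K$, so the iterated forgetful map $\text{res}_{Q'} \circ \text{res}_{K'}$ agrees with the single forgetful map $\text{res}_{Q'} \colon \PMod(S,K) \to \PMod(S,Q')$.

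Finally, since $Q'$ is itself an $n$-element subset of $K$, the subgroup $\text{res}_{Q'}(N) \subset \PMod(S,Q')$ represents $N_n$ under the canonical identification $\PMod(S,Q') \cong \PMod(S,n)$. The map $h_* \colon \PMod(S,Q') \to \PMod(S,Q)$ is induced by the self-homeomorphism $h$ of $S$, which defines a mapping class in $\Mod(S,n)$ carrying $Q'$ to $Q$; under the canonical identifications of $\PMod(S,Q')$ and $\PMod(S,Q)$ with $\PMod(S,n)$, the isomorphism $h_*$ becomes an inner automorphism of $\Mod(S,n)$ and therefore preserves the normal subgroup $N_n$. Hence $(N^+)_n = N_n$, as required. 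The only mildly delicate point is the identification $h^{-1}(K) = K'$, which must hold so that $Q'$ really lies in $K'$; once this is in hand the rest is bookkeeping.
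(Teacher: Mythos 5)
Your argument is correct and is essentially the paper's (one‑line) proof written out in detail: both reduce to the facts that restriction to $Q$ commutes with $h_*$ and that $N_n$ is independent of the choice of $n$-element subset of $K$ (which is where the normality of $N$ and the transitivity of $\Mod(S,K)$ on finite subsets of $K$ enter). The only cosmetic difference is that you pull $Q$ back to $Q'=h^{-1}(Q)\subset K'$ while the paper pushes $Q$ forward into $K'$ by a homeomorphism preserving $K$; the substance is identical.
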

\begin{proof}
For any $n$ and any $n$-element subset $Q\subset K$ there is some homeomorphism of $S$
taking $K$ to $K$ and $Q$ into $K'$. The proposition follows.
\end{proof}

\begin{exmp}
Let $N$ be the normal subgroup generated by $p$-th powers of Dehn twists in the boundary
of a dividing disk. Then $N^+$ is the normal subgroup generated by $p$-th powers of Dehn 
twists in all embedded loops of $S\setminus K$ that are homotopically trivial in $S$.
It differs from $N$ by including in the generating set twists around those homotopically trivial loops that enclose all of $K$. 
For sufficiently large $p$ these groups are distinct as can be seen e.g.\/
by the method of Funar~\cite{Funar}.
\end{exmp}

In the examples of normal subgroups $N$ we have encountered, $N$ is a subgroup of its
inflation $N^+$. We believe there could be examples where $N^+$ does not contain $N$
but we do not know of any.

\subsubsection{Closure}

The group $\Mod(S,K)$ has a natural topology in which a sequence $\gamma_i \in \Mod(S,K)$
converges to the identity if there are representative homeomorphisms $h_i$ of $(S,K)$ 
that converge to the identity in the compact--open topology. This happens for example if each $\gamma_i$
has a representative supported in some compact subsurface $R_i \subset S$ where $R_{i+1} \subset R_i$ and
$\cap_i R_i$ is totally disconnected. 

If $N$ is a pure normal subgroup then so is its closure $\bar{N}$. Furthermore, if
$x \subset K$ is any $n$-element set, then for any convergent sequence 
$\gamma_i \to \gamma$ in $\bar{N}$ the restrictions of $\gamma_i$ to $\PMod(S,x)$ are
eventually constant. Thus:

\begin{prop}\label{prop:closure_spectrum}
For any pure normal $N$ the groups $N$ and $\bar{N}$ have the same spectrum.
\end{prop}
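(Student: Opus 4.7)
The plan is to take the observation made in the paragraph preceding the proposition---that for any convergent sequence $\gamma_i \to \gamma$ in $\bar{N}$ and any finite $x \subset K$ the restrictions $\gamma_i|_x$ are eventually constant---and turn it into a two-line argument. Since all the heavy lifting in this paper has already been done, I expect the proof to be short; what needs justifying is the eventual-constancy statement itself.

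First I would record that $N \subset \bar{N}$ gives $N_n \subset \bar{N}_n$ for free. For the reverse inclusion, fix $n$, an $n$-element set $Q \subset K$, and an element $\alpha \in \bar{N}_n$, so that $\alpha = \gamma|_Q$ in $\PMod(S,Q)$ for some $\gamma \in \bar{N}$. Choose $\gamma_i \in N$ with $\gamma_i \to \gamma$. Granting eventual constancy of restrictions, we have $\gamma_i|_Q = \gamma|_Q$ in $\PMod(S,Q)$ for all sufficiently large $i$; but $\gamma_i|_Q \in N_n$ by definition of the spectrum, so $\alpha \in N_n$.

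The one step that requires any real content is the eventual-constancy claim. Unpacking the topology on $\Mod(S,K)$ as described just before the proposition: $\gamma_i \to \gamma$ means we can find representative homeomorphisms $h_i, h$ of $(S,K)$ with $h_i \to h$ in the compact-open topology. Since both $h_i$ and $h$ fix $K \supset Q$ pointwise, the composition $g_i := h_i h^{-1}$ lies in $\Homeo(S,Q)$ and $g_i \to \mathrm{id}$ in compact-open. The key point to cite is that the identity component of $\Homeo(S,Q)$ is open in the compact-open topology---a standard local contractibility result for homeomorphism groups of surfaces with finitely many marked points---so for large $i$ the homeomorphism $g_i$ is isotopic to the identity rel $Q$, which means exactly that $[h_i] = [h]$ in $\PMod(S,Q)$.

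The main (and essentially only) obstacle is verifying that the forgetful map $\PMod(S,K) \to \PMod(S,Q)$ is continuous when $\PMod(S,Q)$ is given the discrete topology; this is what the local-contractibility input supplies. Everything else is formal bookkeeping, and no structural theorem about $N$ beyond its definition is needed.
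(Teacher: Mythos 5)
Your proposal follows the same route as the paper, whose entire proof is the sentence preceding the statement: everything reduces to the claim that for a convergent sequence $\gamma_i \to \gamma$ the images in the discrete group $\PMod(S,Q)$ are eventually equal to $\gamma|_Q$. (You are right that bare eventual constancy would not suffice --- one needs the eventual value to be $\gamma|_Q$, i.e.\ continuity of the forgetful map to the discrete group --- and your argument correctly aims at that stronger statement.) The gap is in your justification of this claim. The identity component of $\Homeo(S,Q)$ is \emph{not} open in the compact--open topology when $S$ is noncompact, and in this section of the paper $S$ is an arbitrary connected orientable surface. For instance, on a once-punctured torus a Dehn twist about a curve parallel to the end, supported in annuli escaping every compact set, lies in every compact--open neighborhood of the identity yet is a fixed nontrivial mapping class; on an infinite-genus surface, twists about genus-carrying curves escaping to infinity converge to the identity in the compact--open sense while having pairwise distinct nontrivial images in $\PMod(S,Q)$, so the restrictions are not even eventually constant. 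The local contractibility results you invoke are statements about compact surfaces (or about topologies that control behaviour at the ends); they do not supply the openness you need here, so your key step fails precisely in the generality the Inertia section works in.

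To repair the argument you must use the structure of the convergent sequences actually in play rather than a blanket continuity claim. In the case the paper highlights --- representatives $h_i$ supported in compact subsurfaces $R_i$ with $R_{i+1}\subset R_i$ and $\bigcap_i R_i$ totally disconnected --- one can argue directly: for large $i$ the support of $h_i$ lies in a disjoint union of disks each of diameter smaller than the minimal distance between points of $Q$, hence each containing at most one point of $Q$, and any homeomorphism supported in such a union is a product of twists about curves bounding disks with at most one marked point, all of which are trivial in $\PMod(S,Q)$. Alternatively, for compact $S$ your argument does go through: $\Homeo(S)$ is a topological group in the compact--open topology, so $g_i = h_i h^{-1}\to \mathrm{id}$, and openness of the identity component of $\Homeo(S,Q)$ is then a legitimate citation. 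In fairness, the paper itself asserts the eventual-constancy statement with no proof at all, and its assertion is subject to the same caveat for noncompact $S$; but as a freestanding proof your version rests on a cited fact that is false in the stated generality, and that is the step that needs to be replaced.
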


It seems very plausible that there could be infinitely many (and maybe even uncountably many)
normal subgroups with the same spectrum. One would like to develop new tools to construct
and distinguish them.

\end{document}